\newtheorem{theorem}{Theorem}[section]
\newtheorem{lemma}[theorem]{Lemma}
\newtheorem{proposition}[theorem]{Proposition}
\newtheorem{remark}[theorem]{Remark}
\newcommand{\dpar}[2]{\dfrac{\partial #1}{\partial #2}}
\renewcommand{\P}{\mathbb P}
\newcommand{\hbbf}{\hat{\mathbf{f}}}
\newcommand{\bbf}{\mathbf{f}}
\newcommand{\bg}{\mathbf{g}}
\newcommand{\bu}{\mathbf{u}}
\newcommand{\bv}{\mathbf{v}}
\newcommand{\bw}{\mathbf{w}}
\newcommand{\bx}{\mathbf{x}}
\newcommand{\bA}{\mathbf{A}}
\newcommand{\bF}{\mathbf{F}}
\newcommand{\bG}{\mathbf{G}}
\begin{document}
\title{An arbitrarily high order and asymptotic preserving kinetic scheme in compressible fluid dynamic 
}
\author{R\'emi Abgrall and Fatemeh Nassajian Mojarrad \\
Institute of Mathematics,
University of Z\"urich\\
Winterthurerstrasse 190, CH 8057 Z\"urich\\
Switzerland}
\date{}
\maketitle
\begin{abstract}
We present a class of arbitrarily high order fully explicit kinetic numerical methods in compressible
fluid dynamics, both in time and space, which include the relaxation
schemes by S. Jin and Z. Xin. These methods can use CFL number larger or equal to unity on regular Cartesian
meshes for multi-dimensional case. These kinetic models depend on a small parameter that can be seen as
a "Knudsen" number. The method is asymptotic preserving in this Knudsen number. Also, the computational
costs of the method are of the same order of a fully explicit scheme. This work is the
extension of Abgrall et al. (2022) \cite{Abgrall} to multi-dimensional systems. 
 We have assessed our method on several  problems for two dimensional scalar problems and Euler equations and the scheme has proven to be robust and to achieve the
theoretically predicted high order of accuracy on smooth solutions.
\end{abstract}
\textbf{keywords.} Kinetic scheme; Compressible fluid dynamics; High order methods; Explicit schemes; Asymptotic preserving; Defect correction method

\section{Introduction}
In this paper, we consider a system of hyperbolic conservation laws in multiple
spatial dimensions
\begin{subequations}\label{eq:1}
\begin{equation}\label{eq:11}
\frac{\partial \bu}{\partial t}+\sum\limits_{i=1}^d \dpar{\bA_i(\bu)}{x_i}  =0,
\end{equation}
with the initial condition
\begin{equation}\label{eq:12}
\bu(\bx,0)=\bu_0(\bx).
\end{equation}
\end{subequations}
where $\bu: \mathbb{R}^d\times \mathbb{R}_{+}\rightarrow \mathbb{R}^K$ and the flux functions $\bA_d$ are locally
Lipschitz continuous on $\mathbb{R}^K$ with values in $\mathbb{R}^K$. We approximate the solution $\bu$ by considering a special class of discrete kinetic systems \cite{Jin,Natalini}. 

Consider a solution $\bbf:\mathbb{R}^d\times \mathbb{R}_{+}\rightarrow \mathbb{R}^{L}$ to the Cauchy problem for the following sequence of semilinear systems
\begin{subequations}\label{eq:2}
\begin{equation}
\label{eq:21}
\frac{\partial \bbf}{\partial t}+\sum\limits_{i=1}^d\Lambda_i \frac{\partial \bbf}{\partial x_i}=\frac{\mathbb{M}(\bu^{\varepsilon})-\bbf}{\varepsilon},
\end{equation}
with the initial condition
\begin{equation}
\label{eq:22}
\bbf(\bx,0)=\bbf_0(\bx).
\end{equation}
\end{subequations}
Here  $\Lambda_i$ are real diagonal $L\times L$ matrices, $\varepsilon$ is a positive number, 
$\mathbb{M}:\mathbb{R}^K\rightarrow \mathbb{R}^L$ is a Lipschitz continuous function, and the function $\bu^{\varepsilon}$ is defined by
\begin{equation*}
 \bu^{\varepsilon}=\sum\limits_{i=1}^N \bbf_i   =\mathbb{P}\bbf
\end{equation*}
where $\mathbb{P}$ is a real constant coefficients $K \times L$ matrix.
To connect problem \eqref{eq:21} - \eqref{eq:22} with problem \eqref{eq:11} - \eqref{eq:12}, we assume that $\mathbb{M}$ is a  Maxwellian function for \eqref{eq:11}, i.e.
\begin{equation}
\label{eq:17}
\begin{cases}
 \mathbb{P}\mathbb{M}(\bu)=\bu,\\
 \mathbb{P}\Lambda_i \mathbb{M}(\bu)=\bA_i(\bu),\quad i=1, \cdots, d.
 \end{cases}
\end{equation}
Clearly, if $\bbf$ converges in some strong topology to a limit $\bg$ and if $\mathbb{P}\bbf_0$ converges to $\bu_0$, then $\mathbb{P}\bg$ is a solution of problem \eqref{eq:11} - \eqref{eq:12}. Actually, the system \eqref{eq:21} is only a BGK approximation for \eqref{eq:11}, see e.g \cite{Bhatnagar,Cercignani} and the references therein. A general stability theory is developed in \cite{Bouchut}, and will implicitly be used throughout this paper, in particular to guaranty that the continuous problem \eqref{eq:2}, equipped with \eqref{eq:17} is well posed.

The method of  \cite{Jin,Aregba}, where the first numerical schemes based on \eqref{eq:2} are described, are based on splitting techniques. As
a result, the order in time is restricted to 2 and can only be improved by nontrivial manipulations \cite{Petra}.
There exists already some ways
for
higher than second order. For example, one approach is to use a relaxed upwind schemes which are proposed running up to CFL 1 and up to third order in
time and space for the finite volume scheme \cite{Schroll}.
A class of high-order weighted essentially nonoscillatory (WENO) reconstructions based on relaxation approximation of hyperbolic systems of conservation laws is presented in \cite{Banda}.
In \cite{Lafitte} a splitting approach is adopted
with a regular CFL stability condition for the overall finite volume scheme. In \cite{Coulette}
a  discontinuous Galerkin method for solving general hyperbolic systems of conservation laws is constructed which is CFL independant and can be
of arbitrary order in time and space. 

Designing algorithms that are
uniformly stable and acccurate in $\varepsilon$  when $\varepsilon \rightarrow 0$ has been an active area of research in recent years. Such schemes are called asymptotic preserving in the sense of Jin \cite{S.Jin}. 
Several asymptotic-preserving methods based on IMEX techniques  have been recently proposed. In \cite{Boscarino,Sebastiano} IMEX Runge–Kutta methods is presented for general hyperbolic systems with relaxation. Specific methods for the Boltzmann equation in the hyperbolic and diffusive
regimes with a computational cost that is independent of $\varepsilon$ is proposed in \cite{Pareschi,Filbet}.

Many of the these methods have inherent limitations with respect to
the order that can be achieved with the time discretization, for example, due to the
time splitting. In \cite{Abgrall} an  arbitrarily high order class of kinetic numerical methods that can run at least at CFL 1 in one dimension is constructed. This work is an extension of \cite{Abgrall} to the multi-dimensional case.

We are interested in a computationally \emph{explicit}
scheme that solves \eqref{eq:2} with uniform accuracy of order $r > 0$ for all $\varepsilon > 0$ and with a
CFL condition, based on the matrices $\Lambda_i,~i=1, \cdots, d$ , that is larger than one for a given system \eqref{eq:1}. We consider the two dimensional case.
The idea is to start from \eqref{eq:21}, we describe first the discretisation of $\Lambda_x \frac{\partial \bbf}{\partial x}$ and $\Lambda_y \frac{\partial \bbf}{\partial y}$. The second step
is to discretise in time. We take into account the source term. The resulting scheme is fully implicit.
The next step is to show that, thanks to the operator $\mathbb{P}$, and using a particular time discretisation, we can
make it computationally explicit, and high order accurate. It is independent of $\varepsilon$.

The paper is organised as follows. In Section \ref{kinetic:sec_time}, we describe the time-stepping algorithm. In Section \ref{kinetic:sec_space}, we describe the space discretisation. In Section \ref{kinetic:sec_stability}, We study the stability of the discretisation of the homogeneous problem. In Section \ref{kinetic:sec_testcases}, we illustrate the robustness and accuracy of the proposed method by means of  numerical tests. Finally, Section \ref{kinetic:sec_conclusion} provide some conclusions and future perspectives.
\section{Time discretisation}
\label{kinetic:sec_time}
Here, we consider the two dimensional case, i.e.
\begin{equation}
\label{eq:13}
\frac{\partial \bbf}{\partial t}+\Lambda_x \frac{\partial \bbf}{\partial x}+\Lambda_y \frac{\partial \bbf}{\partial y}=\frac{\mathbb{M}(\mathbb{P}\bbf)-\bbf}{\varepsilon},
\end{equation}
Knowing the solution $\bbf^{n}$ at time $t_n$, we are looking for the solution at time $t_{n+1}$. First we discretise \eqref{eq:21} in space, we get
\begin{equation}
\label{eq:10}
 \frac{\partial \bbf}{\partial t}+\frac{1}{\Delta x}\Lambda_x \delta ^x\bbf+\frac{1}{\Delta y}\Lambda_y \delta ^y\bbf=\frac{\mathbb{M}(\mathbb{P}\bbf)-\bbf}{\varepsilon},   
\end{equation}
and notice that
\begin{equation}
\label{eq:25}
 \frac{\partial \mathbb{P}\bbf}{\partial t}+\frac{1}{\Delta x}\mathbb{P}(\Lambda_x \delta ^x\bbf)+\frac{1}{\Delta y}\mathbb{P}(\Lambda_y \delta ^y\bbf)=0.  
\end{equation}
Since we want to have a running CFL number of at least one, we use an IMEX defect correction 
method. Using this, we follow \cite{Abgrall} where a defect correction technique can be used and it is made explicit because
the nonlinear term $\mathbb{M}(\mathbb{P}\bbf)$ is explicit.
\subsection{An explicit high order timestepping approach}
The next step is to discretise in time, we subdivide the interval $[t_n,t_{n+1}]$ into sub-intervals obtained from the partition
$$ t_n=t_{(0)}<t_{(1)}<\cdots t_{(p)}<\cdots<t_{(M)}=t_{n+1}$$
with $t_{(p)}=t_n+\beta_p\Delta t$. We  approximate the integral over time using quadrature formula 
\begin{equation*}
\int_{t_n}^{t_n+\beta_p\Delta t} \phi(s)\;ds\approx \Delta t\sum_{q=0}^M w_{pq}\phi(t_n+\beta_q\Delta t)
\end{equation*}
where $w_{pq}$ are the weights. In order to obtain consistent quadrature formula of order $q+1$, we should have
$$w_{pq}=\int_0 ^{\beta_p} l_q(s)\;ds,\quad \sum_{q=0} ^M w_{pq}=\beta_p$$
where $\{l_q\}_{q=0} ^M$ is the Lagrangian basis polynomial of the q-th node.

Let $\bx_l=(x_i,y_j)$ be a fixed grid point, $\bbf_l ^{n,p}\approx\bbf(\bx_l,t_n+\beta_p \Delta t)$ and $\bbf_l ^{n,0}=\bbf_l ^{n}$. We introduce the corrections $r=0, \cdots, R$ for each subinterval $[t_p,t_{p+1}]$ and denote  the solution at the $r$-th correction and the time $t_p$ by $\bbf ^{n,p,r}$.  The notation $\bF$ is the collection of  all the approximations for the sub-steps i.e. the vector $\bF=(\bbf ^{n,1},\cdots,\bbf ^{n,M})^T$.
The notation $\bF^{(r)}$ represents the vector $\bF^{(r)}=(\bbf ^{n,1,r},\cdots,\bbf ^{n,M,r})^T$ i.e. the vector of
all the approximations for the sub-steps at the $r$-th correction.
Now we use a defect correction method and proceed within the time interval $[t_n, t_{n+1}]$ as follows:
\begin{enumerate}
\item For $r=0$, set $\bF^{(0)}=(\bbf ^{n},\cdots,\bbf ^{n})^T$.
\item For each correction $r \geq 0$, define $\bF^{(r+1)}$ by
\begin{equation}
\label{eq:14}
    L_1(\bF^{(r+1)})=L_1(\bF^{(r)})-L_2(\bF^{(r)})
\end{equation}
\item Set $\bF^{n+1}=\bF^{(M)}$.
\end{enumerate}
Formulation \eqref{eq:14} relies on a Lemma which has been proven in \cite{Remi}.

In the following, we introduce the differential operators $L_1$ and $L_2$.
We first define the high order differential operator $L_2$. By integrating \eqref{eq:10} on $[0,t]$, we have
\begin{equation}
\bbf (\bx,t)-\bbf (\bx,0)+\frac{1}{\Delta x} \int\limits_0 ^t \Lambda_x \delta ^x\bbf(\bx,s) \; ds+\frac{1}{\Delta y} \int\limits_0 ^t \Lambda_y \delta ^y\bbf(\bx,s)\;ds=\frac{1}{\varepsilon}\int\limits_0 ^t \Big(\mathbb{M}\big(\mathbb{P}\bbf(\bx,s)\big)-\bbf(\bx,s)\Big)\;ds
\end{equation}
Hence, we get the following approximation for \eqref{eq:13}
\begin{equation}
\label{eq:3}
\bbf_l ^{n,q}-\bbf_l ^{n,0}+\frac{\Delta t}{\Delta x}\big(\sum_{k=0} ^M w_{qk}\Lambda_x\delta_l ^x\bbf^{n,k} \big)+\frac{\Delta t}{\Delta y}\big(\sum_{k=0} ^M w_{qk}\Lambda_y\delta_l ^y\bbf^{n,k} \big)-\frac{\Delta t}{\varepsilon}\sum_{k=0} ^M w_{qk}\big(\mathbb{M}(\mathbb{P} \bbf_l ^{n,k})-\bbf_l ^{n,k}\big)=0
\end{equation}
for $q=1, \cdots,M$. For any $l$, define $[L_2(\bF^{(r)})]_l$
as
\begin{align*}
[L_2(\bF^{(r)})]_l&=\bF_l^{(r)}-\bF_l ^{(0)}+\frac{\Delta t}{\Delta x}\Lambda_xW\delta_l ^x \bF^{(r)}+\frac{\Delta t}{\Delta x}\Lambda_x \bw_0\otimes\delta_l ^x \bbf ^{n,0}+\frac{\Delta t}{\Delta y}\Lambda_yW\delta_l ^y \bF^{(r)}+\frac{\Delta t}{\Delta y}\Lambda_y \bw_0\otimes\delta_l ^y \bbf ^{n,0}\\
&-\frac{\Delta t}{\varepsilon}W\big( \mathbb{M}(\mathbb{P}\bF_l^{(r)})-\bF_l^{(r)}\big)-\frac{\Delta t}{\varepsilon}\bw_0\otimes\big( \mathbb{M}(\mathbb{P}\bbf_l ^{n,0})-\bbf_l ^{n,0}\big)
\end{align*}
where 
$$
W = 
\begin{pmatrix}
	w_{11} & \cdots & w_{1M} \\
	\vdots & \vdots & \vdots \\
	w_{M1} & \cdots & w_{MM} \\	
\end{pmatrix},\quad\bw_0=\begin{pmatrix} 
w_{10}\\ \vdots \\w_{M0}
\end{pmatrix}$$
and
$$\mathbb{M}(\mathbb{P}\bF_l^{(r)})=\big(\mathbb{M}(\mathbb{P}\bbf _l^{n,1,r}),\cdots,\mathbb{M}(\mathbb{P}\bbf_l ^{n,M,r})\big)^T$$
The resulting scheme derived by $L_2$ operator is implicit, and it is very difficult to solve. Now we describe the low order differential operator $L_1$.  We use the forward Euler method on each sub-time step
\begin{equation}
\label{eq:4}
\bbf_l ^{n,q}-\bbf_l ^{n,0}+\beta_q\frac{\Delta t}{\Delta x}\Lambda_x\delta_l ^x\bbf ^{n,0} +\beta_q\frac{\Delta t}{\Delta y}\Lambda_y\delta_l ^y\bbf ^{n,0} -\frac{\Delta t}{\varepsilon}\sum_{k=0} ^M w_{qk}\big(\mathbb{M}(\mathbb{P} \bbf_l ^{n,k})-\bbf_l ^{n,k}\big)=0
\end{equation}
for $q=1 ,\cdots,M$. The low order differential operator $[L_1(\bF^{(r)})]_l$ reads
\begin{align*}
[L_1(\bF^{(r)})]_l&=\bF_l^{(r)}-\bF_l ^{(0)}+\frac{\Delta t}{\Delta x}B\Lambda_x\delta_l ^x \bF^{(0)}+\frac{\Delta t}{\Delta y}B\Lambda_y\delta_l ^y \bF^{(0)}\\
&-\frac{\Delta t}{\varepsilon}W\big( \mathbb{M}(\mathbb{P}\bF^{(r)}_l)-\bF_l^{(r)}\big)-\frac{\Delta t}{\varepsilon}\bw_0\otimes\big( \mathbb{M}(\mathbb{P}\bbf_l ^{n,0})-\bbf_l^{n,0}\big)
\end{align*}
where $B=\text{diag}(\beta_1, \cdots, \beta_M)$. This is still an implicit approximation in time, but the convection part is now explicit. We also note we have kept the same form of the source term approximation in both cases, for reasons that will be explained bellow.

Now we can write \eqref{eq:14} as a multi-step method
where each step writes as
\begin{equation}
\label{eq:15}
\begin{split}
\bF_l^{(r+1)}-\bF_l ^{(0)}&+\frac{\Delta t}{\Delta x}B\Lambda_x\delta_l ^x \bF^{(0)}+\frac{\Delta t}{\Delta y}B\Lambda_y\delta_l ^y \bF^{(0)}
-\frac{\Delta t}{\varepsilon}W\big( \mathbb{M}(\mathbb{P}\bF^{(r+1)}_l)-\bF_l^{(r+1)}\big)
\\&-\frac{\Delta t}{\varepsilon}\bw_0\otimes\big( \mathbb{M}(\mathbb{P}\bbf_l ^{n,0})-\bbf_l^{n,0}\big)
=\frac{\Delta t}{\Delta x}\Lambda_xW(\delta_l ^x \bF^{(0)}-\delta_l ^x \bF^{(r)})+\frac{\Delta t}{\Delta y}\Lambda_yW(\delta_l ^y \bF^{(0)}-\delta_l ^y \bF^{(r)})
\end{split}
\end{equation}
for any $l$. By applying $\mathbb{P}$ to this equation, we will obtain the following equation for calculating $\mathbb{P}\bF_l^{(r+1)}$
\begin{equation}
\label{eq:16}
\mathbb{P}\bF_l^{(r+1)}=\mathbb{P}\bF_l^{(0)}-\frac{\Delta t}{\Delta x}\mathbb{P}\Lambda_xW\delta_l ^x \bF^{(r)}-\frac{\Delta t}{\Delta x}\bw_0\otimes\mathbb{P} \Lambda_x\delta_l ^x \bbf^{n,0}-\frac{\Delta t}{\Delta y}\mathbb{P}\Lambda_yW\delta_l ^y \bF^{(r)}-\frac{\Delta t}{\Delta y}\bw_0\otimes\mathbb{P} \Lambda_y\delta_l ^y \bbf^{n,0}
\end{equation}
and we substitute $\mathbb{P}\bF_l^{(r+1)}$ into the Maxwellian in the \eqref{eq:15}. Alternatively, one can rewrite \eqref{eq:15} as follows
\begin{equation}
\label{eq:7}
\begin{split}
(\text{Id}_{M\times M}+\frac{\Delta t}{\varepsilon}W)\bF_l^{(r+1)}&=\frac{\Delta t}{\varepsilon}W\mathbb{M}(\mathbb{P}\bF_l^{(r+1)}) +\bF_l^{(0)}-\frac{\Delta t}{\Delta x}\Lambda_xW\delta_l ^x \bF^{(r)}
-\frac{\Delta t}{\Delta x}\bw_0\otimes \Lambda_x\delta_l ^x \bbf^{n,0}\\
&-\frac{\Delta t}{\Delta y}\Lambda_yW\delta_l ^y \bF^{(r)}
-\frac{\Delta t}{\Delta y}\bw_0\otimes \Lambda_y\delta_l ^y \bbf^{n,0}+\frac{\Delta t}{\varepsilon}\bw_0\otimes\big( \mathbb{M}(\mathbb{P}\bbf_l ^{n,0})-\bbf_l^{n,0}\big)
\end{split}
\end{equation}
If $\text{Id}_{M\times M}+{\Delta t}W/{\varepsilon}$ is invertible, the defect correction computes the solution at time $t_{n+1}$ using $M$ steps of the form
\begin{subequations}
\label{eq8}
\begin{equation}
\label{eq:8}
\begin{split}
\bF_l^{(r+1)}&=(\text{Id}_{M\times M}+\frac{\Delta t}{\varepsilon}W)^{-1}\bigg( \frac{\Delta t}{\varepsilon}W\mathbb{M}(\mathbb{P}\bF_l^{(r+1)})+\bF_l^{(0)}-\frac{\Delta t}{\Delta x}\Lambda_xW\delta_l ^x \bF^{(r)}
-\frac{\Delta t}{\Delta x}\bw_0\otimes \Lambda_x\delta_l ^x \bbf^{n,0}\\
&-\frac{\Delta t}{\Delta y}\Lambda_yW\delta_l ^y \bF^{(r)}
-\frac{\Delta t}{\Delta y}\bw_0\otimes \Lambda_y\delta_l ^y \bbf^{n,0}+\frac{\Delta t}{\varepsilon}\bw_0\otimes\big( \mathbb{M}(\mathbb{P}\bbf_l ^{n,0})-\bbf_l^{n,0}\big)
\bigg).
\end{split}
\end{equation}
For computing $\mathbb{P}\bF_l ^{(r+1)}$, for any $l$, we rewrite \eqref{eq:16} as (for simplicity, we drop the superscript $n$)
\begin{equation}\label{eq:8:2}
\mathbb{P}\bbf_l ^{q,r+1}-\mathbb{P}\bbf_l ^{0}+\frac{\Delta t}{\Delta x}\sum_{k=0} ^M w_{qk}\mathbb{P}\Lambda_x\delta_l ^x\bbf^{k,r} +\frac{\Delta t}{\Delta y}\sum_{k=0} ^M w_{qk}\mathbb{P}\Lambda_y\delta_l ^y\bbf^{k,r}=0,
\end{equation}
\end{subequations}
for $q=1, \cdots, M$. 

We  write the increment $\delta_l \bbf^{k}$ as a sum of residuals, as follows
\begin{subequations}
    \label{residual}
\begin{equation}\label{residual:1}
\begin{split}
\delta_l \bbf^k&=\Delta y\Lambda_x(\hat{\bbf}_{i+\frac{1}{2},j}^k-\hat{\bbf}_{i-\frac{1}{2},j}^k) +\Delta x\Lambda_y(\hat{\bbf}_{i,j+\frac{1}{2}}^k-\hat{\bbf}_{i,j-\frac{1}{2}}^k)\\
&=\Phi_{l,(i,j)} ^{[i,i+1]\times [j,j+1],k}+
\Phi_{l,(i,j)} ^{[i-1,i]\times [j,j+1],k}+
\Phi_{l,(i,j)} ^{[i,i+1]\times [j-1,j],k}+
\Phi_{l,(i,j)} ^{[i-1,i]\times [j-1,j],k}
\end{split}
\end{equation}
with
\begin{equation}\label{residual:2}
\begin{split}
\Phi_{l,(i,j)} ^{[i,i+1]\times [j,j+1],k}&=\frac{1}{2}\bigg (\Lambda_x
\big ( \hbbf_{i+1/2,j}-\bbf_{ij})\Delta y+ \Lambda_y\big (\hbbf_{i,j+1/2}-\bbf_{ij})\big )\Delta y\bigg ),  \\
\Phi_{l,(i,j)} ^{[i-1,i]\times [j,j+1],k}&=\frac{1}{2}\bigg (\Lambda_x
\big (\hbbf_{i+1/2,j}-\bbf_{ij}\big )\Delta y+ \Lambda_y(\bbf_{ij}-\hbbf_{i,j-1/2}\big )\Delta x\bigg ),
\\
\Phi_{l,(i,j)} ^{[i-1,i]\times [j-1,j],k}&= \frac{1}{2}\bigg ( \Lambda_x\big (\bbf_{ij}-\hbbf_{i-1/2,j}\big )\Delta y+\Lambda_y\big ( \bbf_{ij}-\hbbf_{ij-1/2}\big )\Delta x \bigg ), \\
\Phi_{l,(i,j)} ^{[i-1,i]\times [j-1,j],k}&=\frac{1}{2}\bigg ( \Lambda_x\big (\bbf_{ij}-\hbbf_{i-1/2,j}\big )\Delta y+\Lambda_y\big (  \hbbf_{i,j+1/2}-\bbf_{ij}\big ) \Delta x\bigg ).  \\
\end{split}
\end{equation}
\end{subequations}
We see that 
\begin{equation}\label{residual:conservation}
\begin{split}
\Phi_{l,(i,j)} ^{[i,i+1]\times [j,j+1],k}&+\Phi_{l,(i+1,j)} ^{[i,i+1]\times [j,j+1],k}+\Phi_{l,(i+1,j+1)} ^{[i,i+1]\times [j,j+1],k}+\Phi_{l,(i,j+1)} ^{[i,i+1]\times [j,j+1],k}\\
&=
\frac{\Delta x}{2}\Lambda_x\big ( \bbf_{i+1,j}+\bbf_{i+1,j+1}\big )-\frac{\Delta x}{2}\Lambda_x\big (\bbf_{ij}+\bbf_{i,j+1}\big ) \\
& +\frac{\Delta y}{2}\Lambda_y\big ( \bbf_{i+1,j}+\bbf_{i+1,j+1}\big )-\frac{\Delta y}{2}\Lambda_y\big ( \bbf_{ij}+\bbf_{i,j+1}\big )
,\end{split}
\end{equation}
\begin{figure}[H]
\centering
{\includegraphics[width=0.35\textwidth]{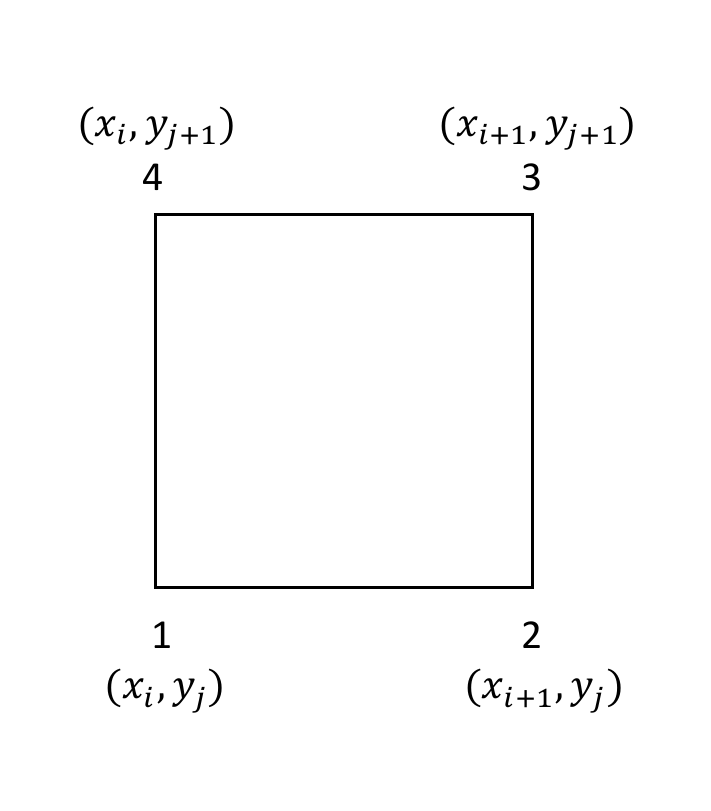}
}
\caption{Sample quad element.}
\label{fig:element}
\end{figure}
i.e. the sum of the residuals assigned to each of the four vertices of  the quad $[x_{i},x_{i+1}]\times [y_j,y_{j+1}]$ is equal to an approximation of the flux across the boundary of this quad, see Fig. \ref{fig:element}.
This guaranties local conservation, see \cite{AbgrallRD}.
The reason of writing the space update as as sum of residual will become clear in section \ref{sec:limit}.

Now we explain  the residuals for an example where the  velocities are orthogonal. This case is suggested in \cite{Aregba}. In order to construct the
system \eqref{eq:21} one must find $\mathbb{P}$, $\mathbb{M}$, and $\Lambda_j ~(j=1,\cdots,d)$ such that the consistency relations \eqref{eq:17} are satisfied.
We take $L=N \times K$, $\mathbb{P}=(\text{Id}_{K\times K}, \cdots,\text{Id}_{K\times K})$ with $N$ blocks $\text{Id}_{K\times K}$, the identity matrix in $\mathbb{R}^K$. 
Each matrix $\Lambda_j~ (j=1,\cdots,d)$ is constituted of $N$ diagonal blocks of
size $K\times K$, i.e.
\begin{equation*}
 \Lambda_j=\text{diag}(C_1^{(j)},\cdots,C_N^{(j)}),\quad   C_i^{(j)}=\lambda_i^{(j)}\text{Id}_{K\times K},\quad \lambda_i^{(j)}\in \mathbb{R}.
\end{equation*}
where $j=1,\cdots,N$. Then \eqref{eq:21} can be rewritten as
\begin{equation}
\frac{\partial \bbf_j}{\partial t}+\sum\limits_{i=1}^d\lambda_j^{(i)} \frac{\partial \bbf_j}{\partial x_i}=\frac{\mathbb{M}_j(\bu^{\varepsilon})-\bbf_j}{\varepsilon},\, j=1,\cdots,N.
\end{equation}
in which $\bu^{\varepsilon}=\sum\limits_{j=1}^N \bbf_n$. This example works with any number of blocks provided that $N\geq d+1$. We take the velocities such that 
\begin{equation*}
  \Lambda_j=\text{diag}(\lambda_{ij}\text{Id} _{K\times K})_{1 \leq i \leq N},
  \end{equation*}
\begin{equation*}
  \sum_{i=1} ^{N} \lambda_{ij}=0,~ j=1,\cdots , d,
\end{equation*}  
and
\begin{equation*}
  \sum_{i=1} ^{N} \lambda_{il}\lambda_{ij}=0,~ j,l=1,\cdots d,~ j\neq l.
\end{equation*}  
Therefore, the 
Maxwellian function is given by
\begin{equation*}
\mathbb{M}_{i}(\bu)=\frac{\bu}{N}+\sum_{j=1} ^d \frac{\bA_j(\bu)}{a_j ^2} \lambda_{ij},~i=1,\cdots,N.
\end{equation*}
where $a_j ^2=\sum_{i=1} ^{N} \lambda_{ij}^2$.
We fix $J,N'\geq1$, $\lambda>0$ and set
\begin{equation*}
\begin{cases}
\lambda_j=\frac{n}{J}\lambda &1 \leq j\leq J,~J\geq 1,\\
  \Lambda_x=\text{diag}\Big(\lambda_n\text{diag}\big( \cos(\frac{i\pi}{2N^{'}})\text{ I} \big)_{1\leq i\leq 4N^{'}}\Big) &N^{'}\geq 1,
 \\
  \Lambda_y=\text{diag}\Big(\lambda_n\text{diag}\big( \sin(\frac{i\pi}{2N^{'}})\text{ I} \big)_{1\leq i\leq 4N^{'}}\Big).
  \end{cases}
\end{equation*}
Here we have $N=4N'J$. We define the Maxwellian functions  by
\begin{equation*}
\mathbb{M}_{j}(\bu)=\frac{1}{N}\Big( \bu+\frac{12\;i\;J}{\lambda(J+1)(2J+1)}(\bA_1\cos\frac{j\pi}{2N^{'}}+\bA_2\sin\frac{j\pi}{2N^{'}}) \Big), ~1\leq j\leq 4N^{'}, 1\leq i\leq J.
\end{equation*}
Now, we consider the special case where $J=N'=1$ and  $N=4$. Set
\begin{equation*}
\begin{cases}
  \Lambda_x=\text{diag}\Big(\lambda\text{ diag}\big( \cos(\frac{i\pi}{2})\text{ I} \big)_{1\leq i\leq 4}\Big),
  \\
  \Lambda_y=\text{diag}\Big(\lambda\text{ diag}\big( \sin(\frac{i\pi}{2})\text{ I} \big)_{1\leq i\leq 4}\Big).
  \end{cases}
\end{equation*}
Therefore the Maxwellian functions  are
\begin{equation*}
\mathbb{M}_{i}(\bu)=\frac{1}{4}\Big( \bu+\frac{2}{\lambda}(\bA_1\cos\frac{i\pi}{2}+\bA_2\sin\frac{i\pi}{2}) \Big).
\end{equation*}
for $1\leq i \leq4$.

\subsection{Error estimate}
The natural question is: how many iterations should we do in the method \eqref{eq8} to recover the time and space accuracy?

Again, we will consider the two dimensional version of \eqref{eq:11}. Let us consider $\varphi: \mathbb{R}^2\rightarrow \mathbb{R}^K$, which is continuously differentiable on $\mathbb{R}^2$ with values in $\mathbb{R}^K$ and has a compact support. We will consider the discrete version of $L^2$ and $H^1$ norms of $\varphi$ as follows
\begin{equation*}
\Vert \varphi \Vert _{L^2}^2 =  \sum\limits_{i,j\in \mathbb{Z}}\Delta x\Delta y \Vert \varphi_{i,j} \Vert^2,\quad \Vert \varphi \Vert _{H^1}^2 =\Vert \varphi \Vert _{L^2}^2+\sum\limits_{i,j\in \mathbb{Z}}\Delta x\Delta y  \Big(\Big \Vert \dfrac{\varphi_{i+1,j}-\varphi_{ij}}{\Delta x}
\Big \Vert^2+\Big \Vert\dfrac{\varphi_{i,j+1}-\varphi_{i,j}}{\Delta y}
\Big \Vert^2\Big).
\end{equation*}
In the following, we will set:
$$D_{i+1/2,j}\varphi=\dfrac{\varphi_{i+1,j}-\varphi_{ij}}{\Delta x},
D_{i,j+1/2}\varphi=\dfrac{\varphi_{i,j+1}-\varphi_{i,j}}{\Delta y}\text{ and } D\varphi_{ij}=(D_{i+1/2,j}\varphi,D_{i,j+1/2}\varphi).$$ We have a Poincar\'e inegality, on any compact. 

One can consider the discrete equivalent of $L^2_{loc}$ and $H^{-1}_{loc}$ in an interval $I=[a,b]\times [c,d]$ as follows
\begin{equation*}
 \Vert  \bF \Vert_{2,I}=\sup\limits_{\varphi \in C_0^1(I)^K}  \frac{\sum\limits_{i,j\in \mathbb{Z}}\Delta x\Delta y \langle \varphi_{i,j},\bbf_{i,j} \rangle}{\Vert \varphi \Vert _{L^2}},\quad  \Vert  \bF \Vert_{-1,I}=\sup\limits_{\varphi \in C_0^1(I)^K}  \frac{\sum\limits_{i,j\in \mathbb{Z}}\Delta x\Delta y \langle \varphi_{i,j},\bbf_{i,j} \rangle}{\Vert \varphi \Vert _{H^1}}.
\end{equation*}
We have a first result on the $H^{-1}_{loc}$ estimate of $L_2(\bF)-L_1(\bF)$.
\begin{lemma}
If $\widehat{\bF}_{i+\frac{1}{2},j}=\sum\limits_{l=-r}^s\alpha_l \bF_{i+l,j}$ and $\widehat{\bF}_{i,j+\frac{1}{2}}=\sum\limits_{l=-r'}^{s'}\alpha'_l \bF_{i,j+l}$, we have
\begin{equation*}
\Vert  L_2(\bF)-L_1(\bF)\Vert _{-1,I}\leq \gamma\Delta t\Vert\bF \Vert _{2,I}  
\end{equation*}
 where $\gamma=\max\{\max\limits_{-r\leq l\leq s} |\alpha_l|,\max\limits_{-r'\leq l\leq s'}|\alpha'_l|\}\times \max\{\max\limits_m|\lambda_m^x|,\max\limits_n|\lambda_n^y| \}$.
 \label{lemma:1}
\end{lemma}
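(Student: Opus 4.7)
The plan is to simplify the difference operator $L_2-L_1$ first, so that the proof reduces to a clean duality estimate. Inspecting the definitions, the identity-time terms $\bF_l^{(r)}-\bF_l^{(0)}$, the implicit collision block $\tfrac{\Delta t}{\varepsilon}W(\mathbb{M}(\mathbb{P}\bF)-\bF)$, and the contribution $\tfrac{\Delta t}{\varepsilon}\bw_0\otimes(\mathbb{M}(\mathbb{P}\bbf^{n,0})-\bbf^{n,0})$ in the source appear identically in $L_1$ and $L_2$, so they cancel. Using the quadrature consistency $\sum_{k=0}^M w_{qk}=\beta_q$ together with the fact that $\bF^{(0)}$ is constant in the sub-step index, one has the identity $B\,\delta^x \bF^{(0)}=W\,\delta^x\bF^{(0)}+\bw_0\otimes\delta^x\bbf^{n,0}$ (and likewise in $y$). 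Substituting this into the formula for $L_1$ yields the clean representation
\begin{equation*}
[L_2(\bF)-L_1(\bF)]_l=\frac{\Delta t}{\Delta x}\,\Lambda_x W\,\delta^x_l(\bF-\bF^{(0)})+\frac{\Delta t}{\Delta y}\,\Lambda_y W\,\delta^y_l(\bF-\bF^{(0)}),
\end{equation*}
so everything reduces to estimating a purely linear spatial operator of size $O(\Delta t)$ in the $H^{-1}_{\text{loc}}$ norm.

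Next, I would proceed by duality. Fix a test function $\varphi\in C_0^1(I)^K$ and evaluate the pairing
\begin{equation*}
S=\sum_{i,j\in\Z}\Delta x\,\Delta y\,\langle\varphi_{ij},[L_2(\bF)-L_1(\bF)]_{ij}\rangle.
\end{equation*}
Expanding $\delta^x_l\bF=\widehat{\bF}_{i+1/2,j}-\widehat{\bF}_{i-1/2,j}$ with the stencil reconstruction, and similarly in $y$, I apply discrete Abel summation on the forward differences; boundary contributions vanish thanks to the compact support of $\varphi$. The difference operator transfers onto $\varphi$, producing the discrete derivatives $D_{i+1/2,j}\varphi$ and $D_{i,j+1/2}\varphi$, and $S$ takes the schematic form $\Delta t\bigl(\langle D_x\varphi,\Lambda_x W\widehat{\bF}\rangle+\langle D_y\varphi,\Lambda_y W\widehat{\bF}\rangle\bigr)$ on the grid.

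Then I apply Cauchy--Schwarz on each of the two sums, yielding $\|D\varphi\|_{L^2}$ on one side (at most $\|\varphi\|_{H^1}$) and $\|\widehat{\bF}\|_{L^2}$ on the other. The reconstruction estimate $\|\widehat{\bF}\|_{L^2}\le \max_l|\alpha_l|\,C_{\text{stencil}}\,\|\bF\|_{L^2}$ follows from a standard shift inequality over the finite stencil, and analogously in $y$. The diagonal matrices $\Lambda_x,\Lambda_y$ contribute the extra factors $\max_m|\lambda_m^x|$ and $\max_n|\lambda_n^y|$, while $W$ is bounded by a uniform quadrature-norm constant. Collecting these factors, dividing by $\|\varphi\|_{H^1}$, and taking the supremum delivers the claimed bound with $\gamma$ of the stated form, interpreting the $\|\bF\|_{2,I}$ on the right-hand side as applied to the relevant perturbation $\bF-\bF^{(0)}$, which is how the lemma is consumed in \eqref{eq:14}.

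The main obstacle is the bookkeeping for the sub-step weight matrix $W$: the estimate must pass $W$ through Cauchy--Schwarz cleanly so that the final constant $\gamma$ depends only on the stencil coefficients $\max|\alpha_l|,\max|\alpha'_l|$ and the velocity extrema $\max|\lambda_m^x|,\max|\lambda_n^y|$, with every quadrature-induced factor absorbed into a universal constant. A secondary care is that the test function $\varphi$ in the definition of $\|\cdot\|_{2,I}$ is a single $\R^K$-valued function, so the pairing with the block vector $\bF$ has to be performed componentwise in the sub-step index and then resummed; this is routine but should be done explicitly so that no stray $M$-dependence leaks into $\gamma$.
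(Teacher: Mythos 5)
Your proposal is correct and follows essentially the same route as the paper: pair the difference $L_2(\bF)-L_1(\bF)$ with a test function, sum by parts to move the discrete differences onto $\varphi$, and conclude by Cauchy--Schwarz with the stencil and velocity bounds. The only addition is that you make explicit the cancellation identity $B\,\delta^x\bF^{(0)}=W\,\delta^x\bF^{(0)}+\bw_0\otimes\delta^x\bbf^{n,0}$ and the absorption of $\Vert W\Vert$ into the constant, both of which the paper leaves implicit.
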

\begin{proof}
\begin{align*}
    \Big|\sum\limits_{i,j}\Delta x\Delta y \langle &\varphi_{i,j},[L_2(\bF)]_{i,j}-[L_1(\bF)]_{i,j} \rangle\Big|^2\\&=\Big|\sum\limits_{i,j} \Delta t\Delta y \langle \varphi_{i,j},\Lambda_xW(\widehat{\bF}_{i+\frac{1}{2},j}-\widehat{\bF}_{i-\frac{1}{2},j}) \rangle+\sum\limits_{i,j} \Delta t\Delta x \langle \varphi_{i,j},\Lambda_yW(\widehat{\bF}_{i,j+\frac{1}{2}}-\widehat{\bF}_{i,j-\frac{1}{2}}) \rangle \Big|^2\\
    &=\Big|\sum\limits_{i,j} \Delta t\Delta x\Delta y \langle D_{i+\frac{1}{2},j}\varphi,\Lambda_xW\widehat{\bF}_{i+\frac{1}{2},j} \rangle+\sum\limits_{i,j} \Delta t\Delta x\Delta y \langle D_{i,j+\frac{1}{2}}\varphi,\Lambda_yW\widehat{\bF}_{i,j+\frac{1}{2}} \rangle \Big|^2\\
    &\leq\Delta t^2\Vert \Lambda_x \Vert^2 \Vert W \Vert^2\Big(\sum\limits_{i,j} \Delta x\Delta y \Vert D_{i+\frac{1}{2},j}\Vert^2+\sum\limits_{i,j}\Delta x \Delta y\Vert D_{i,j+\frac{1}{2}}\varphi \Vert^2\Big)\\&\times\Big(\sum\limits_{i,j} \Delta x\Delta y \Vert \widehat{\bF}_{i+\frac{1}{2},j}\Vert^2
    +\sum\limits_{i,j}\Delta x \Delta y\Vert \widehat{\bF}_{i,j+\frac{1}{2}}\Vert^2 \Big) \\ &\leq  \gamma^2\Delta t^2\Vert \varphi \Vert_{H^1}^2\Vert \bF\Vert_{2,I} ^2.
\end{align*}
Then the proof is complete.
\end{proof}
Before we proceed to proposition \ref{prop:1}, we need a further result on the behaviour of $L_1$.
\begin{lemma}
\label{lemma:2}
Let us consider $\bF,\bF'$ and $\bG,\bG'$ such that
$$
[L_1(\bF)]_l=\bG_l,\quad [L_1(\bF')]_l=\bG'_l
$$
for some $l$. And we assume that 
there
exists $\gamma,\gamma'>0$ such that
\begin{equation*}
\Vert  (\text{Id}_{M\times M}+\frac{\Delta t}{\varepsilon}W)^{-1} \Vert  \leq \gamma,\quad  \frac{\Delta t}{\varepsilon}\Vert  (\text{Id}_{M\times M}+\frac{\Delta t}{\varepsilon}W)^{-1}W \Vert  \leq \gamma', 
\end{equation*}
then there exists constant $\eta>0$, independent of $\bF, \bF', I$ and $\varepsilon$ such that
\begin{align*}
 \Vert  \bF_l- \bF'_l \Vert_{2,I}\leq \eta \Vert  \bG_l- \bG'_l \Vert_{2,I}, \quad  \Vert  \bF_l- \bF'_l \Vert_{-1,I}\leq \eta \Vert  \bG_l- \bG'_l \Vert_{-1,I},
\end{align*}
\end{lemma}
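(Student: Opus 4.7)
The plan is to exploit the algebraic structure of $L_1$, which depends on its argument only through the pointwise linear term $(\mathrm{Id}+\tfrac{\Delta t}{\varepsilon}W)\bF_l$ and the nonlinear term $-\tfrac{\Delta t}{\varepsilon}W\,\mathbb{M}(\mathbb{P}\bF_l)$; all other contributions (those involving $\bF^{(0)}$ and $\bbf_l^{n,0}$) are frozen and disappear upon subtracting the equations for $\bF$ and $\bF'$. First I would subtract $[L_1(\bF')]_l=\bG'_l$ from $[L_1(\bF)]_l=\bG_l$ to obtain the pointwise identity
\begin{equation*}
\Big(\mathrm{Id}+\tfrac{\Delta t}{\varepsilon}W\Big)(\bF_l-\bF'_l)
=\bG_l-\bG'_l+\tfrac{\Delta t}{\varepsilon}W\bigl(\mathbb{M}(\mathbb{P}\bF_l)-\mathbb{M}(\mathbb{P}\bF'_l)\bigr).
\end{equation*}
Inverting the left-hand operator via the first hypothesis gives
\begin{equation*}
\bF_l-\bF'_l=\Big(\mathrm{Id}+\tfrac{\Delta t}{\varepsilon}W\Big)^{-1}(\bG_l-\bG'_l)
+\Big(\mathrm{Id}+\tfrac{\Delta t}{\varepsilon}W\Big)^{-1}\tfrac{\Delta t}{\varepsilon}W\bigl(\mathbb{M}(\mathbb{P}\bF_l)-\mathbb{M}(\mathbb{P}\bF'_l)\bigr).
\end{equation*}

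For the $L^2_{\mathrm{loc}}$ estimate I would take the discrete $L^2$ norm of both sides, using the two operator bounds $\gamma$ and $\gamma'$ from the hypothesis together with the Lipschitz continuity of $\mathbb{M}$ (with constant $L_{\mathbb{M}}$) and boundedness of the linear map $\mathbb{P}$. This yields
\begin{equation*}
\Vert \bF-\bF'\Vert_{2,I}\le \gamma\Vert \bG-\bG'\Vert_{2,I}+\gamma'\, L_{\mathbb{M}}\Vert\mathbb{P}\Vert\,\Vert \bF-\bF'\Vert_{2,I}.
\end{equation*}
Under the (implicit) assumption that $\gamma'L_{\mathbb{M}}\Vert\mathbb{P}\Vert<1$, the nonlinear term can be absorbed into the left-hand side, and one obtains the first estimate with $\eta=\gamma/(1-\gamma'L_{\mathbb{M}}\Vert\mathbb{P}\Vert)$. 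Crucially, $\eta$ depends only on $\gamma,\gamma',L_{\mathbb{M}},\Vert\mathbb{P}\Vert$, so it is independent of $\bF,\bF',I$ and $\varepsilon$.

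For the $H^{-1}_{\mathrm{loc}}$ estimate I would proceed by duality. Pairing the identity above with an arbitrary test function $\varphi\in C_0^1(I)^K$ gives
\begin{equation*}
\sum_{i,j}\Delta x\Delta y\,\langle \varphi_{i,j},\bF_{i,j}-\bF'_{i,j}\rangle
=\sum_{i,j}\Delta x\Delta y\,\Big\langle \Big(\mathrm{Id}+\tfrac{\Delta t}{\varepsilon}W\Big)^{-T}\varphi_{i,j},\bG_{i,j}-\bG'_{i,j}\Big\rangle + \mathcal{R},
\end{equation*}
where $\mathcal{R}$ is the contribution from the Maxwellian difference. The first sum is bounded by $\gamma\Vert\varphi\Vert_{H^1}\Vert\bG-\bG'\Vert_{-1,I}$ by definition of the $H^{-1}$ norm (noting that the constant matrix $(\mathrm{Id}+\tfrac{\Delta t}{\varepsilon}W)^{-T}$ preserves $H^1$ up to the factor $\gamma$). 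For $\mathcal{R}$, I would use Cauchy--Schwarz and the Lipschitz bound on $\mathbb{M}$ to dominate it by $\gamma'L_{\mathbb{M}}\Vert\mathbb{P}\Vert\Vert\varphi\Vert_{L^2}\Vert\bF-\bF'\Vert_{2,I}$, and then invoke the already-established $L^2$ estimate together with a local Poincaré inequality to re-express $\Vert\bF-\bF'\Vert_{2,I}$ in terms of $\Vert\bG-\bG'\Vert_{-1,I}$; dividing by $\Vert\varphi\Vert_{H^1}$ and taking the supremum yields the $H^{-1}$ inequality.

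The main obstacle is the nonlinear Maxwellian term: it is the sole reason the argument is not a pure operator-inversion, and closing the fixed-point-style bound requires the contraction condition $\gamma'L_{\mathbb{M}}\Vert\mathbb{P}\Vert<1$. This is presumably intended to hold because $\gamma'$ carries the natural smallness of $\Delta t/\varepsilon$ in the convective regime and is bounded uniformly in the stiff regime $\varepsilon\to 0$ by the structure of $(\mathrm{Id}+\tfrac{\Delta t}{\varepsilon}W)^{-1}\tfrac{\Delta t}{\varepsilon}W$; a careful verification that this quantity stays bounded away from $1/(L_{\mathbb{M}}\Vert\mathbb{P}\Vert)$ uniformly in $\varepsilon$ is the delicate step that the hypothesis of the lemma effectively packages.
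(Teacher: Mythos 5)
Your opening identity is algebraically correct, but the way you close the estimate does not work, and the paper's proof rests on an idea your argument is missing. After subtraction, your right-hand side still contains $\mathbb{M}(\mathbb{P}\bF_l)-\mathbb{M}(\mathbb{P}\bF'_l)$, i.e. the Maxwellian evaluated at the \emph{unknowns}, and you must absorb $\gamma' L_{\mathbb{M}}\Vert\mathbb{P}\Vert\,\Vert\bF-\bF'\Vert_{2,I}$ into the left-hand side. The contraction condition $\gamma' L_{\mathbb{M}}\Vert\mathbb{P}\Vert<1$ is not among the hypotheses of the lemma and in fact cannot hold in the regime the lemma is designed for: the consistency relation $\mathbb{P}\mathbb{M}(\bu)=\bu$ gives $\Vert\bu-\bv\Vert\leq\Vert\mathbb{P}\Vert\,\Vert\mathbb{M}(\bu)-\mathbb{M}(\bv)\Vert$, hence $L_{\mathbb{M}}\Vert\mathbb{P}\Vert\geq 1$; and since $\frac{\Delta t}{\varepsilon}(\text{Id}_{M\times M}+\frac{\Delta t}{\varepsilon}W)^{-1}W\rightarrow \text{Id}_{M\times M}$ as $\varepsilon\rightarrow 0$ (for invertible $W$, which holds for all the quadratures used in the paper), any $\gamma'$ valid uniformly in $\varepsilon$ satisfies $\gamma'\geq 1$. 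So $\gamma' L_{\mathbb{M}}\Vert\mathbb{P}\Vert\geq 1$, and the absorption step fails precisely where $\eta$ is required to be independent of $\varepsilon$. Your closing remark that the hypothesis ``packages'' this smallness is not right: the hypothesis only asserts boundedness of $\gamma'$, not smallness.

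The paper avoids the fixed point entirely. Because $\mathbb{P}\big(\mathbb{M}(\mathbb{P}\bF_l)-\bF_l\big)=0$, applying $\mathbb{P}$ to $[L_1(\bF)]_l=\bG_l$ annihilates the stiff source term and yields $\mathbb{P}\bF_l=\mathbb{P}\bG_l+\mathbb{P}\bF_l^{(0)}-\frac{\Delta t}{\Delta x}\mathbb{P}B\Lambda_x\delta_l^x\bF^{(0)}-\frac{\Delta t}{\Delta y}\mathbb{P}B\Lambda_y\delta_l^y\bF^{(0)}$, an \emph{explicit} affine expression in $\bG_l$ and frozen data. Substituting this known quantity into the argument of the Maxwellian and then inverting $\text{Id}_{M\times M}+\frac{\Delta t}{\varepsilon}W$ gives, after subtraction, $\bF_l-\bF'_l=(\text{Id}_{M\times M}+\frac{\Delta t}{\varepsilon}W)^{-1}\big(\bG_l-\bG'_l+\frac{\Delta t}{\varepsilon}W(\mathbb{M}(\cdot)-\mathbb{M}(\cdot))\big)$, where the two Maxwellian arguments now differ exactly by $\mathbb{P}(\bG_l-\bG'_l)$. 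Every term on the right is controlled by $\Vert\bG_l-\bG'_l\Vert$ alone, in either norm, giving $\eta=\gamma+\gamma' L_{\mathbb{M}}\Vert\mathbb{P}\Vert$ with no smallness requirement. This projection step is the essential ingredient; without it the lemma is not provable from the stated hypotheses.
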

\begin{proof}
To prove the lemma, we apply $\mathbb{P}$ to $[L_1(\bF)]_l=\bG_l$, we get
\begin{equation*}
 \mathbb{P}\bF_l=\mathbb{P}\bG_l+\mathbb{P}\bF_l ^{(0)}-\frac{\Delta t}{\Delta x}\mathbb{P}B\Lambda_x\delta_l ^x \bF^{(0)}-\frac{\Delta t}{\Delta y}\mathbb{P}B\Lambda_y\delta_l ^y \bF^{(0)}
\end{equation*}
Now, we substitute $\mathbb{P}\bF$ into the Maxwellian in the equation $[L_1(\bF)]_l=\bG_l$, we have
\begin{align*}
 \bF_l&=\bG_l +\bF_l ^{(0)}-\frac{\Delta t}{\Delta x}B\Lambda_x\delta_l ^x \bF^{(0)}-\frac{\Delta t}{\Delta y}B\Lambda_y\delta_l ^y \bF^{(0)}\\
&+\frac{\Delta t}{\varepsilon}W\Big[ \mathbb{M}\big (\mathbb{P}\bG_l+\mathbb{P}\bF_l ^{(0)}-\frac{\Delta t}{\Delta x}\mathbb{P}B\Lambda_x\delta_l ^x \bF^{(0)}-\frac{\Delta t}{\Delta y}\mathbb{P}B\Lambda_y\delta_l ^y \bF^{(0)}\big )-\bF_l\Big]+\frac{\Delta t}{\varepsilon}\bw_0\otimes\Big( \mathbb{M}(\mathbb{P}\bbf_l ^{n,0})-\bbf_l^{n,0}\Big)  
\end{align*}
We can
collect all the unknown terms $\bF_l$ on the left hand side
\begin{align*}
 \bF_l&=(\text{Id}_{M\times M}+\frac{\Delta t}{\varepsilon}W)^{-1} \Big[\bG_l +\bF_l ^{(0)}-\frac{\Delta t}{\Delta x}B\Lambda_x\delta_l ^x \bF^{(0)}-\frac{\Delta t}{\Delta y}B\Lambda_y\delta_l ^y \bF^{(0)}\\
&+\frac{\Delta t}{\varepsilon}W \mathbb{M}\big (\mathbb{P}\bG_l+\mathbb{P}\bF_l ^{(0)}-\frac{\Delta t}{\Delta x}\mathbb{P}B\Lambda_x\delta_l ^x \bF^{(0)}-\frac{\Delta t}{\Delta y}\mathbb{P}B\Lambda_y\delta_l ^y \bF^{(0)}\big )+\frac{\Delta t}{\varepsilon}\bw_0\otimes\big( \mathbb{M}(\mathbb{P}\bbf_l ^{n,0})-\bbf_l^{n,0}\big)  \Big ]
\end{align*}
We have
\begin{equation*}
  \bF_l-\bF'_l= (\text{Id}_{M\times M}+\frac{\Delta t}{\varepsilon}W)^{-1} \Big(\bG_l-\bG'_l+\frac{\Delta t}{\varepsilon}W\mathbb{M}(\mathbb{P}\bG_l-\mathbb{P}\bG'_l)\Big).
\end{equation*}
This concludes the proof.
\end{proof}
\begin{proposition}
\label{prop:1}
Under the conditions of lemmas \ref{lemma:1} and \ref{lemma:2}, if $\bF^{*}$ is the unique
solution of $L_2(\bF)=0$, there exists $\theta$ independent of $\varepsilon$ such that we have
\begin{equation*}
\Vert  \bF^{(r+1)}- \bF^{*} \Vert_{L^2}\leq  (\theta \Delta t)^{r+1} \Vert  \bF^{(0)}- \bF^{*} \Vert_{L^2}.
\end{equation*}
\end{proposition}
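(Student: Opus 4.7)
The plan is the standard defect-correction contraction argument, using the two lemmas as the main technical inputs. Set $e^{(r)} := \bF^{(r)} - \bF^{*}$. Since $L_2(\bF^{*})=0$ and the defect-correction step reads $L_1(\bF^{(r+1)}) = L_1(\bF^{(r)}) - L_2(\bF^{(r)})$, subtracting the trivial identity $L_1(\bF^{*}) = L_1(\bF^{*}) - L_2(\bF^{*})$ I obtain
\begin{equation*}
L_1(\bF^{(r+1)}) - L_1(\bF^{*}) \;=\; -\bigl[(L_2-L_1)(\bF^{(r)}) - (L_2-L_1)(\bF^{*})\bigr].
\end{equation*}
So the plan is to bound the right-hand side in terms of $e^{(r)}$ via Lemma \ref{lemma:1}, and then invert $L_1$ on the left-hand side via Lemma \ref{lemma:2}, composing to get a factor $\theta\Delta t$.

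First I would observe that the difference $L_2(\bF)-L_1(\bF)$, written out, consists of the transport-discrepancy
$\tfrac{\Delta t}{\Delta x}\Lambda_x W\delta_l^x \bF - \tfrac{\Delta t}{\Delta x}B\Lambda_x\delta_l^x\bF^{(0)}$ (plus the analogous $y$ terms and fixed contributions involving $\bw_0$ and $\bbf^{n,0}$), while the source terms $-\tfrac{\Delta t}{\varepsilon}W(\mathbb{M}(\mathbb{P}\bF)-\bF)$ and the $\bw_0$-pieces are \emph{identical} in $L_1$ and $L_2$ and therefore cancel in the difference. Consequently $(L_2-L_1)(\bF^{(r)}) - (L_2-L_1)(\bF^{*})$ is \emph{linear} in $e^{(r)}$ and equals precisely
$\tfrac{\Delta t}{\Delta x}\Lambda_x W \delta_l^x e^{(r)} + \tfrac{\Delta t}{\Delta y}\Lambda_y W \delta_l^y e^{(r)}$,
which is exactly the quantity estimated in Lemma \ref{lemma:1}. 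This yields
\begin{equation*}
\bigl\Vert (L_2-L_1)(\bF^{(r)}) - (L_2-L_1)(\bF^{*})\bigr\Vert_{-1,I} \;\leq\; \gamma\Delta t\,\Vert e^{(r)}\Vert_{2,I}.
\end{equation*}

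Next I apply Lemma \ref{lemma:2} with $\bG = L_1(\bF^{(r+1)})$ and $\bG' = L_1(\bF^{*})$ in the appropriate norm to transfer this control to $e^{(r+1)}$:
\begin{equation*}
\Vert e^{(r+1)}\Vert \;\leq\; \eta\,\Vert L_1(\bF^{(r+1)}) - L_1(\bF^{*})\Vert \;\leq\; \eta\gamma\,\Delta t\,\Vert e^{(r)}\Vert.
\end{equation*}
Setting $\theta = \eta\gamma$ and iterating $r+1$ times gives the claimed geometric decay.

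The main obstacle in making this rigorous is the bookkeeping of norms: Lemma \ref{lemma:1} naturally produces an $H^{-1}_{loc}$ estimate with an $L^2_{loc}$ norm on the right, whereas the conclusion is stated in $L^2$. To iterate cleanly one must either (i) invoke the $L^2$-version of Lemma \ref{lemma:2} (which inverts $L_1$ in any of the two stated norms and is available from the same explicit inversion formula used in its proof) so that an $L^2$-to-$L^2$ chain can be closed once one also has an $L^2$ analogue of Lemma \ref{lemma:1} (which follows by the same Cauchy--Schwarz argument, absorbing an extra $1/\Delta x + 1/\Delta y$ factor into $\gamma$), or (ii) work throughout in the $H^{-1}_{loc}$ norm and use a discrete Poincaré inequality on the compact $I$ to close the gap with the $L^2$ norm appearing on the right. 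The second route is the cleaner one and is the approach I would follow, interpreting the $L^2$ norm in the statement as the one naturally induced on compactly supported iterates through these Poincaré-type equivalences; the constant $\theta$ then collects $\eta$, $\gamma$ and the equivalence constant on $I$, and is independent of $\varepsilon$ thanks to the uniform bounds assumed in Lemma \ref{lemma:2} on $(\mathrm{Id}+\tfrac{\Delta t}{\varepsilon}W)^{-1}$ and $\tfrac{\Delta t}{\varepsilon}(\mathrm{Id}+\tfrac{\Delta t}{\varepsilon}W)^{-1}W$.
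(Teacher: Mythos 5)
Your proposal is correct and follows essentially the same route as the paper: the defect-correction identity $L_1(\bF^{(r+1)})-L_1(\bF^{*})=\big(L_1(\bF^{(r)})-L_1(\bF^{*})\big)-\big(L_2(\bF^{(r)})-L_2(\bF^{*})\big)$, Lemma \ref{lemma:1} to bound the (linear) difference $L_2-L_1$ applied to the error, Lemma \ref{lemma:2} to invert $L_1$, and the discrete Poincar\'e inequality to reconcile the $H^{-1}_{loc}$/$L^2_{loc}$ norms. In fact you supply more detail than the paper's proof, which stops at the identity and simply invokes Poincar\'e and the two lemmas.
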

\begin{proof}
We have
\begin{align*}
  L_1(\bF^{(r+1)})- L_1(\bF^{*})&=L_1(\bF^{(r)})-L_2(\bF^{(r)})- L_1(\bF^{*})\\
  &=\big(L_1(\bF^{(r)})- L_1(\bF^{*})\big)-\big( L_2(\bF^{(r)})- L_2(\bF^{*}) \big)
\end{align*}
Also, we can write a Poincar\'e  inequality for $\varphi$ as follows
\begin{equation*}
    \Vert \varphi \Vert_{2,I}\leq C\Vert D\varphi \Vert_{2,I}.
\end{equation*}
where $C$ is constant. Using results from Poincar\'e  inequality, lemmas \ref{lemma:1} and \ref{lemma:2}, the proof is concluded.
\end{proof}
Starting from the Chapman-Enskog expansion of the \eqref{eq:21}, we can show that our method is
asymptotic preserving. We have
\begin{equation}
\label{eq:9}
\begin{split}
 \bbf&=\mathbb{M}(\bu^{\varepsilon})+\mathcal{O}(\varepsilon), \\
 \dpar{ \bu^{\varepsilon}}{ t}&+\dpar{\bA_1(\bu^{\varepsilon})}{x}+\dpar{\bA_2(\bu^{\varepsilon})}{y}=\mathcal{O}(\varepsilon).
\end{split}
\end{equation}
\begin{proposition}
\eqref{eq:8} is consistent with the limit model \eqref{eq:9} up to an $\mathcal{O}(\varepsilon)$.
\end{proposition}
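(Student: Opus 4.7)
The plan is to exploit the reformulation \eqref{eq:7}, the projected update \eqref{eq:8:2}, and the consistency relations \eqref{eq:17}. First I would multiply \eqref{eq:7} through by $\varepsilon$ to obtain
\begin{equation*}
\bigl(\varepsilon\,\text{Id}_{M\times M}+\Delta t\, W\bigr)\bF_l^{(r+1)} = \Delta t\, W\,\mathbb{M}(\mathbb{P}\bF_l^{(r+1)}) + \Delta t\,\bw_0\otimes\bigl(\mathbb{M}(\mathbb{P}\bbf_l^{n,0})-\bbf_l^{n,0}\bigr) + \mathcal{O}(\varepsilon),
\end{equation*}
where the $\mathcal{O}(\varepsilon)$ lumps together the $\varepsilon\bF_l^{(0)}$ contribution and the convection terms, which carry a prefactor $\varepsilon\Delta t/\Delta x$ or $\varepsilon\Delta t/\Delta y$. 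Under the natural AP hypothesis that the incoming data is near-Maxwellian, $\bbf_l^{n,0}=\mathbb{M}(\mathbb{P}\bbf_l^{n,0})+\mathcal{O}(\varepsilon)$, the remaining $\bw_0$ term is itself $\mathcal{O}(\varepsilon)$. Provided $W$ is invertible (a standing property of the Gauss/Radau nodes used to define $w_{pq}$), canceling $\Delta t\,W$ yields the leading-order Maxwellian closure
\begin{equation*}
\bF_l^{(r+1)}=\mathbb{M}(\mathbb{P}\bF_l^{(r+1)})+\mathcal{O}(\varepsilon),
\end{equation*}
which is the first line of \eqref{eq:9}. The same argument applied at step $r$ shows inductively that each $\bF^{(r)}$ lies on the Maxwellian manifold up to order $\varepsilon$.

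The second step is to apply $\mathbb{P}$ to the scheme. By \eqref{eq:8:2} the stiff source has already been eliminated and one obtains
\begin{equation*}
\mathbb{P}\bbf_l^{q,r+1}-\mathbb{P}\bbf_l^{0}+\frac{\Delta t}{\Delta x}\sum_{k=0}^{M}w_{qk}\,\mathbb{P}\Lambda_x\,\delta_l^x\bbf^{k,r}+\frac{\Delta t}{\Delta y}\sum_{k=0}^{M}w_{qk}\,\mathbb{P}\Lambda_y\,\delta_l^y\bbf^{k,r}=0.
\end{equation*}
Substituting $\bbf^{k,r}=\mathbb{M}(\mathbb{P}\bbf^{k,r})+\mathcal{O}(\varepsilon)$ from the first step and invoking the consistency relations $\mathbb{P}\Lambda_i\mathbb{M}(\bu)=\bA_i(\bu)$ from \eqref{eq:17}, each flux contribution reduces to $\delta_l^{x_i}\bA_i(\mathbb{P}\bbf^{k,r})+\mathcal{O}(\varepsilon)$. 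One therefore recognises a DeC-in-time, finite-volume-in-space discretisation of
\begin{equation*}
\partial_t\bu^{\varepsilon}+\partial_x\bA_1(\bu^{\varepsilon})+\partial_y\bA_2(\bu^{\varepsilon})=\mathcal{O}(\varepsilon),
\end{equation*}
which is precisely the limit equation in \eqref{eq:9}.

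The main subtlety in executing this plan is the invertibility of the quadrature weight matrix $W$, upon which the Maxwellian conclusion of the first step rests. Although it is routine for the Gauss-type nodes used here, it should be flagged explicitly as an assumption on the underlying quadrature rule. The remaining bookkeeping, namely propagating the near-Maxwellian property inductively through the DeC iterations $r=0,1,\dots,R$ and tracking that the $\varepsilon\bF_l^{(0)}$ and $\varepsilon\Delta t/\Delta x$, $\varepsilon\Delta t/\Delta y$ convection remainders really are uniformly $\mathcal{O}(\varepsilon)$ under a CFL-type bound on $\Delta t/\Delta x$ and $\Delta t/\Delta y$, is straightforward once the invertibility of $W$ is granted.
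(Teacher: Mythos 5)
Your proof follows essentially the same route as the paper's: project with $\mathbb{P}$ to eliminate the stiff source, use the relaxation structure of the update to conclude $\bF^{(r+1)}=\mathbb{M}(\mathbb{P}\bF^{(r+1)})+\mathcal{O}(\varepsilon)$, and then invoke the consistency relations $\mathbb{P}\Lambda_i\mathbb{M}(\bu)=\bA_i(\bu)$ to recognise a space--time discretisation of the limit model \eqref{eq:9}. You are in fact more explicit than the paper about the two hypotheses this argument needs --- invertibility of $W$, so that $(\text{Id}_{M\times M}+\frac{\Delta t}{\varepsilon}W)^{-1}\frac{\Delta t}{\varepsilon}W=\text{Id}_{M\times M}+\mathcal{O}(\varepsilon)$, and well-prepared data $\bbf^{n,0}=\mathbb{M}(\mathbb{P}\bbf^{n,0})+\mathcal{O}(\varepsilon)$ so that the $\bw_0$ source term is harmless --- both of which the paper leaves implicit.
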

\begin{proof}
Let us define $\bu^{\varepsilon,(r)}=\mathbb{P}\bF^{(r)}$, from \eqref{eq:16} we get
\begin{equation*}
 \bu^{\varepsilon,(r+1)}=\mathbb{P}\bF^{(0)}-\frac{\Delta t}{\Delta x}\mathbb{P}\Lambda_xW\delta ^x \bF^{(r)}-\frac{\Delta t}{\Delta x}\bw_0\otimes\mathbb{P} \Lambda_x\delta ^x \bbf^{n,0}-\frac{\Delta t}{\Delta y}\mathbb{P}\Lambda_yW\delta ^y \bF^{(r)}-\frac{\Delta t}{\Delta y}\bw_0\otimes\mathbb{P} \Lambda_y\delta ^y \bbf^{n,0}+\mathcal{O}(\varepsilon)   
\end{equation*}
Using \eqref{eq:17}, we have
\begin{align*}
    \bF^{(r+1)}&=\mathbb{M}\big(\mathbb{P}\bF^{(0)}-\frac{\Delta t}{\Delta x}\mathbb{P}\Lambda_xW\delta ^x \bF^{(r)}-\frac{\Delta t}{\Delta x}\bw_0\otimes\mathbb{P} \Lambda_x\delta ^x \bbf^{n,0}\\&-\frac{\Delta t}{\Delta y}\mathbb{P}\Lambda_yW\delta ^y \bF^{(r)}-\frac{\Delta t}{\Delta y}\bw_0\otimes\mathbb{P} \Lambda_y\delta ^y \bbf^{n,0}\big)+\mathcal{O}(\varepsilon)\\
    &=\mathbb{M}(\bu^{\varepsilon,(r+1)})+\mathcal{O}(\varepsilon)
\end{align*}
Then
\begin{align*}
 \bu^{\varepsilon,(r+1)}&=\bu^{\varepsilon,(0)} -\frac{\Delta t}{\Delta x}W\delta ^x\mathbb{P}\Lambda_x \bF^{(r)}-\frac{\Delta t}{\Delta x}\bw_0\otimes\delta^{x}\mathbb{P}\Lambda_x   \bbf^{n,0}-\frac{\Delta t}{\Delta y}W\delta ^y\mathbb{P}\Lambda_y \bF^{(r)}-\frac{\Delta t}{\Delta y}\bw_0\otimes\delta ^y\mathbb{P} \Lambda_y \bbf^{n,0}+\mathcal{O}(\varepsilon)  \\
 &=\bu^{\varepsilon,(0)} -\frac{\Delta t}{\Delta x}W\delta ^x\bA_1(\bu^{\varepsilon,(r)})-\frac{\Delta t}{\Delta x}\bw_0\otimes\delta^{x}\bA_1(\bu^{\varepsilon,(0)})-\frac{\Delta t}{\Delta y}W\delta ^y\bA_2(\bu^{\varepsilon,(r)})\\&-\frac{\Delta t}{\Delta y}\bw_0\otimes\delta^{y}\bA_2(\bu^{\varepsilon,(0)})+\mathcal{O}(\varepsilon).
\end{align*}
We observe that if the the spatial discretisation is consistent with the  derivative in space, the above formula is the space discretisation  and time of the asymptotic model \eqref{eq:9}, and the result is concluded.
\end{proof}
\subsection{Time discretisation in the \texorpdfstring{$L_2$}{Lg} operator}
We consider first, second and fourth order approximation in time in the $L_2$
operator, when there is no source term. 
\begin{enumerate}
    \item 
For the first order approximation, we get
\begin{equation*}
  \mathbb{P}\bbf^{n,1}-\mathbb{P}\bbf^{n,0}+\frac{\Delta t}{\Delta x}\mathbb{P}(\Lambda _x\delta^x\bbf^{n,1})+\frac{\Delta t}{\Delta y}\Lambda _y\mathbb{P}(\delta^y\bbf^{n,1})=0,  
\end{equation*}
where $\bbf^{n,0}=\bbf^n$ and $\bbf^{n,1}\approx \bbf(t_{n+1})$. The matrix $W$ becomes $W=(1)$. Then, we get
\begin{equation*}
    (\text{Id}_{1\times 1}+\frac{\Delta t}{\varepsilon}W)^{-1}=\frac{\varepsilon}{\varepsilon+\Delta t},\quad \frac{\Delta t}{\varepsilon}(\text{Id}_{1\times 1}+\frac{\Delta t}{\varepsilon}W)^{-1}W=\frac{\Delta t}{\varepsilon+\Delta t},
\end{equation*}
We observe that these two matrices are uniformly bounded.
\item
For the second order approximation, which is Crank-Nicholson method, the scheme becomes
\begin{equation*}
   \mathbb{P} \bbf^{n,1}-\mathbb{P}\bbf^{n,0}+\frac{\Delta t}{\Delta x}\big(\frac{1}{2}\mathbb{P}(\Lambda _x\delta^x\bbf^{n,0})+\frac{1}{2}\mathbb{P}(\Lambda _x\delta^x\bbf^{n,1})\big)+\frac{\Delta t}{\Delta y}\big(\frac{1}{2}\mathbb{P}(\Lambda _y\delta^y\bbf^{n,0})+\frac{1}{2}\mathbb{P}(\Lambda _y\delta^y\bbf^{n,1})\big)=0,
\end{equation*}
also we have $W=(\frac{1}{2})$.
Similarly, we see that two matrices $(\text{Id}_{1\times 1}+\frac{\Delta t}{\varepsilon}W)^{-1}$ and $\frac{\Delta t}{\varepsilon}(\text{Id}_{1\times 1}+\frac{\Delta t}{\varepsilon}W)^{-1}W$ are uniformly bounded.
\item
For the fourth order scheme \cite{Hairer}, we get 
\begin{align*}
    \mathbb{P}\bbf^{n,1}-\mathbb{P}\bbf^{n,0}&+\frac{\Delta t}{\Delta x}\big(\frac{5}{24}\mathbb{P}(\Lambda _x\delta^x\bbf^{n,0})+\frac{1}{3}\mathbb{P}(\Lambda _x\delta^x\bbf^{n,1})-\frac{1}{24}\mathbb{P}(\Lambda _x\delta^x\bbf^{n,2})\big)\\&+\frac{\Delta t}{\Delta y}\big(\frac{5}{24}\mathbb{P}(\Lambda _y\delta^y\bbf^{n,0})+\frac{1}{3}\mathbb{P}(\Lambda _y\delta^y\bbf^{n,1})-\frac{1}{24}\mathbb{P}(\Lambda _y\delta^y\bbf^{n,2})\big)=0,
\\
   \mathbb{P} \bbf^{n,2}-\mathbb{P}\bbf^{n,0}&+\frac{\Delta t}{\Delta x}\big(\frac{1}{6}\mathbb{P}(\Lambda _x\delta^x\bbf^{n,0})+\frac{2}{3}\mathbb{P}(\Lambda _x\delta^x\bbf^{n,1})+\frac{1}{6}\mathbb{P}(\Lambda _x\delta^x\bbf^{n,2})\big)\\&+\frac{\Delta t}{\Delta y}\big(\frac{1}{6}\mathbb{P}(\Lambda _y\delta^y\bbf^{n,0})+\frac{2}{3}\mathbb{P}(\Lambda _y\delta^y\bbf^{n,1})+\frac{1}{6}\mathbb{P}(\Lambda _y\delta^y\bbf^{n,2})\big)=0,
\end{align*}
where $\bbf^{n,0}=\bbf^n$, $\bbf^{n,1}\approx\bbf(t_n+\frac{\Delta t}{2})$ and $\bbf^{n,2}\approx \bbf(t_{n+1})$. Also, we have $$W=\begin{pmatrix}
	\frac{1}{3}&  -\frac{1}{24} \\
	\frac{2}{3}  & \frac{1}{6} 	
\end{pmatrix}$$
It is easy to observe that the matrix $\text{Id}_{2\times 2}+\frac{\Delta t}{\varepsilon}W$ is invertible and the matrices $(\text{Id}_{2\times 2}+\frac{\Delta t}{\varepsilon}W)^{-1}$ and $\frac{\Delta t}{\varepsilon}(\text{Id}_{2\times 2}+\frac{\Delta t}{\varepsilon}W)^{-1}W$
are uniformly bounded.
\end{enumerate}
\section{Space discretisation}
\label{kinetic:sec_space}
\subsection{Definition of the \texorpdfstring{$\delta^x$}{Lg} and \texorpdfstring{$\delta^y$}{Lg} operators}
Since $\Lambda_x$ and $\Lambda_y$ are diagonal matrices, we can consider the scalar transport equation
\begin{equation*}
    \dpar{f}{t}+a\dpar{f}{x}+b\dpar{f}{y}=0,
\end{equation*}
where $a$ and $b$ are constants and both of them can not be zero at the same time. Next, we will discuss the approximation of $\dpar{f_l}{x}$, the approximation for $\dpar{f_l}{y}$
is obtained in a similar manner. In \cite{Iserles} has been developed a stable numerical finite difference methods for first-order hyperbolics in one dimension,
which use $s$ forward and $r$ backward steps in the discretisation of the space derivatives that are of order at most $2\min \{r+1,s\}$.  These methods are based on the approximation
of $\dpar{f}{x}(x_i,y_j)$, $i,j \in \mathbb{Z}$, by a finite difference
\begin{equation*}
 \frac{1}{\Delta x} \sum\limits_{k=-r}^{s}\alpha_kf_{i+k,j}  
\end{equation*}
when $a>0$ and we say that the method is of the class $\{r,s\}$. If $a<0$, we set
\begin{equation*}
 \frac{1}{\Delta x} \sum\limits_{k=-s}^{r}\alpha_{-k}f_{i+k,j}  
\end{equation*}
Throughout this section we suppose without loss of generality that $a > 0$. Otherwise the roles of $r$ and $s$ are reversed. We call an $\{r, s\}$ method of the highest order an interpolatory method, and is denoted
by $[r.s]$. Following the theorem of \cite{Iserles}, for all integers $r,s\geq 0$ the order of the interpolatory method $[r.s]$ is $q=r+s$, i.e.
\begin{equation*}
 \frac{\delta^x f_{i,j}}{\Delta x}-\dpar{f}{x}(x_i,y_j) =c_{r,s}(\Delta x)^q\frac{\partial^{q+1}f}{\partial x^{q+1}}(x_i,y_j)+\mathcal{O} ((\Delta x)^{q+1}) 
\end{equation*}
where the error constant $c_{r,s}$ is defined by
\begin{equation*}
 c_{r,s}=\frac{(-1)^{s-1}r!s!}{(r+s+1)!}  
\end{equation*}
The coefficients are defined by
\begin{align*}
 \alpha_k&=\frac{(-1)^{k+1}}{k} \frac{r!s!}{(r+k)!(s-k)!},\quad -r\leq k\leq s,\quad k\neq0,\\
 \alpha_0&=-\sum\limits_{k=-r,k\neq 0}^s \alpha_k,
\end{align*}
We recall that the $[r, r]$, $[r, r+1]$ and $[r, r+2]$ schemes are the only
stable interpolatory methods, and we will only consider these approximations. Before
we proceed to the possible choices for $\delta$, it is important to remark that, we can  write
\begin{equation*}
  \delta^x f_{i,j}=  \hat{f}_{i+\frac{1}{2},j}-\hat{f}_{i-\frac{1}{2},j}
\end{equation*}
where
\begin{equation*}
 \hat{f}_{i+\frac{1}{2},j}=\sum\limits_{k=-s+1}^{r} \beta_k f_{i+k,j}  
\end{equation*}
with $\beta_k=\sum\limits_{m\geq k} \alpha_m$.
In the following, we consider first, second and fourth order approximation in x, that in y is done in a similar manner.
\begin{enumerate}
\item First order: \textcolor{red}{Here, we must have $r=0$, $s=1$. We get the upwind scheme}
\begin{equation*}
  \delta_1 ^x f_{i,j}=f_{i,j}-f_{i-1,j}
\end{equation*}
Then, we have
\begin{equation*}
 \dpar{f}{x}(x_i,y_j)=\frac{1}{\Delta x}(f_{i,j}-f_{i-1,j}) +c_{0,1}\Delta x\frac{\partial^{2}f}{\partial x^{2}}(x_i,y_j)+\mathcal{O} ((\Delta x)^{2})   
\end{equation*}
and the flux is given by
\begin{equation*}
 \hat{f}_{i+\frac{1}{2},j} =f_{i+1,j} . 
\end{equation*}
\item \textcolor{red}{Second order case. Two cases are possible}{\color{red}
\begin{itemize}
\item $r=s=1$: centered case
$$\delta_2^xf_{i,j}=\frac{1}{2}\big ( f_{i+1,j}-f_{i-1,j}\big )$$
and the flux is
$$\hat{f}_{i+1/2,j}=\frac{f_{i+1,j}+f_{i,j}}{2}.$$
\item $r=0$, $s=2$. There
$$\delta_2^xf=\frac{3}{2}f_{i,j}-2f_{i-1,j}+\frac{1}{2}f_{i-2,j}$$
and the flux is 
$$\hat{f}_{i+1/2,j}=\frac{3}{2}f_{i,j}-\frac{1}{2}f_{i-1,j}.$$
\end{itemize}
The centered case will no longer be considered.}
\item Third order: \textcolor{red}{Only  choice is  possible $r=1$, $s=2$  and  we get}
\begin{equation*}
  \delta_3 ^x f_{i,j}=\frac{f_{i-2,j}}{6}-f_{i-1,j}+\frac{f_{i,j}}{2}+\frac{f_{i+1,j}}{3}
\end{equation*}
Therefore we have
\begin{equation*}
 \dpar{f}{x}(x_i,y_j)=\frac{1}{\Delta x}(\frac{f_{i-2,j}}{6}-f_{i-1,j}+\frac{f_{i,j}}{2}+\frac{f_{i+1,j}}{3}) +c_{2,1}(\Delta x)^3\frac{\partial^{4}f}{\partial x^{4}}(x_i,y_j)+\mathcal{O} ((\Delta x)^{4})   
\end{equation*}
As before, the flux is given by
\begin{equation*}
 \hat{f}_{i+\frac{1}{2},j} =-\frac{f_{i-1,j}}{6}+\frac{5}{6}f_{i,j} +\frac{f_{i+1,j}}{3} .
\end{equation*}
\item Fourth order: we consider two cases
\begin{itemize}
\item If $r=s=2$, we have
\begin{equation*}
  \delta_4 ^x f_{i,j}=\frac{f_{i-2,j}}{12}-\frac{2}{3}f_{i-1,j}+\frac{2}{3}f_{i+1,j}-\frac{f_{i+2,j}}{12}
\end{equation*}   
We can write
\begin{equation*}
 \dpar{f}{x}(x_i,y_j)=\frac{1}{\Delta x}(\frac{f_{i-2,j}}{12}-\frac{2}{3}f_{i-1,j}+\frac{2}{3}f_{i+1,j}-\frac{f_{i+2,j}}{12}) +c_{2,2}(\Delta x)^4\frac{\partial^{5}f}{\partial x^{5}}(x_i,y_j)+\mathcal{O} ((\Delta x)^{5})   
\end{equation*}
For the flux, we can write as follows
\begin{equation*}
 \hat{f}_{i+\frac{1}{2},j} =\frac{f_{i-1,j}}{12}+\frac{3}{4}f_{i,j} +\frac{3}{4}f_{i+1,j}+\frac{f_{i+2,j}}{12}.
\end{equation*}
\item If $r=1$ and $s=3$, we have
\begin{equation*}
  \delta_4 ^x f_{i,j}=-\frac{f_{i-3,j}}{12}+\frac{f_{i-2,j}}{2}-\frac{3}{2}f_{i-1,j}+\frac{5}{6}f_{i,j}+\frac{f_{i+1,j}}{4}
\end{equation*}   
Hence
\begin{equation*}
 \dpar{f}{x}(x_i,y_j)=\frac{1}{\Delta x}(-\frac{f_{i-3,j}}{12}+\frac{f_{i-2,j}}{2}-\frac{3}{2}f_{i-1,j}+\frac{5}{6}f_{i,j}+\frac{f_{i+1,j}}{4}) +c_{1,3}(\Delta x)^4\frac{\partial^{5}f}{\partial x^{5}}(x_i,y_j)+\mathcal{O} ((\Delta x)^{5})   
\end{equation*}
The flux becomes
\begin{equation*}
 \hat{f}_{i+\frac{1}{2},j} =\frac{f_{i-2,j}}{12}-\frac{5}{12}f_{i-1,j} +\frac{13}{12}f_{i,j}+\frac{f_{i+1,j}}{4}.
\end{equation*}
\textcolor{red}{We will only use the case $r=1$, $s=3$ because the case $r=2$, $s=2$ is centered.}
\end{itemize}
\end{enumerate}
\subsection{Limitation}\label{sec:limit}
We have explored two ways to introduce a nonlinear stabilisation mechanism. The first one is inspired from \cite{sweby,Leveque,Yee} and consists in "limiting" the flux, the second one is inspired by the MOOD paradigm \cite{clain,Vilar}.
\subsubsection{Limitation of the flux}
In this section, we only consider the space discretisation in x, that in y is done in a similar manner.
First, we calculate the difference between first and second order approximation in x. We have
\begin{equation*}
\delta_2 ^x f_{i,j}-\delta_1 ^x f_{i,j}=\frac{1}{6}(f_{i-2,j}-3f_{i,j}+2f_{i+1,j})
\end{equation*}
So for limitation
\begin{align*}
\widetilde{\delta_2^x} f_{i,j}&=\delta_1 ^x f_{i,j}+\frac{\theta}{6}(\delta_2 ^x f_{i,j}-\delta_1 ^x f_{i,j})\\
&=\delta_1 ^x f_{i,j}+\frac{\theta}{6}(f_{i-2,j}-3f_{i,j}+2f_{i+1,j})\\
&=\delta_1 ^x f_{i,j} (1+\frac{\theta r}{6})
\end{align*}
where $\theta \in [0,1]$ and
\begin{equation*}
r=\frac{(f_{i-2,j}-3f_{i,j}+2f_{i+1,j})}{f_{i,j}-f_{i-1,j}} =1-\frac{-2f_{i+1,j}+4f_{i,j}-f_{i-1,j}-f_{i-2,j}}{f_{i,j}-f_{i-1,j}}  
\end{equation*}
We can see that if $f$ is smooth, $r \approx 0$. To have a monotonicity condition, we also need
\begin{equation*}
    6+\theta r \geq 0
\end{equation*}
We want to find $\theta(r)$ such that $\theta(0)=1$ and 
$$0\leq 1+\frac{\theta(r)r}{6} \leq M$$
where $M$ is a constant that will dictate the CFL constraint. Since $\theta(0)=1$, we observe that $M\geq 1$  is a necessary condition. One solution is to choose a $\alpha \in ]0,\min\{6,6(M-1) \}[$. We take
\begin{equation*}
    \theta(r) =  \begin{cases}
    1 & \text{if }r \in[-6+\alpha,6(M-1)-\alpha] , \\
    \frac{-6+\alpha}{r} & \text{if }r <-6+\theta, \\
    \frac{6(M-1)-\alpha}{r} &\text{if } r> 6(M-1)+\alpha.
  \end{cases}
\end{equation*}
After calculations, we get
\begin{equation*}
   \widetilde{\delta_2 ^x}=\delta_1 ^x  (1+\theta r)=\delta_1 ^x+\psi(\delta_1 ^x,\delta_2 ^x)(\delta_2 ^x-\delta_1 ^x)
\end{equation*}
where
\begin{equation*}
    \psi(r,s)(s-r) =  \begin{cases}
    s-r & \text{if }\frac{s}{r} \in[-6+\alpha,6(M-1)-\alpha] , \\
    (-6+\alpha) r & \text{if }\frac{s}{r} <-6+\theta, \\
    (6(M-1)-\alpha) r & \text{if } \frac{s}{r}> 6(M-1)+\alpha.
  \end{cases}
\end{equation*}
For the first and fourth order approximations, it is possible to follow the same technique.

The main drawback of this approach is that the projection on the conservative variable plays no role, while the variable we are really interested in are these variables \ldots so that we can expect some weaknesses.

\subsubsection{Stabilisation by the MOOD technique}
This is inspired from \cite{clain,Vilar}, the variables on which we test are the physical variables $(\rho, \bv, p)$. This section also justifies the way that have written the update of the spatial term as in \eqref{residual}, i.e. a sum of residual that are evaluated on the elements $K_{i+1/2.j+1/2}=[x_i,x_{i+1}]\times [y_j, y_{j+1}]$.

Starting from $\bF^n$, we first compute $\widetilde{\bF^{n+1}}$ with the full order method (i.e. full order in time and space). In the numerical examples, we will take the fourth order accurate method, but other choices can be made.
The algorithm is as follow: We define a vector of logical $\texttt{Flag}_p$ which is false initially for all the grid points, and a vector of logical $\texttt{Flag}_e$ which is also set to false.
\begin{enumerate}
\item For each mesh point $(x_i,y_j)$, we compute  $\tilde{V}_{ij}=\widetilde{(\rho, \bv, p)}^{n+1}$ variables defined by $\P\widetilde{\bF^{n+1}}$. If  $\Tilde{\rho}^{n+1}_{ij}\leq 0$  or $\Tilde{p}^{n+1}_{ij}\leq 0$ or one of the components of $\tilde{V}_{ij}$ are NaN \footnote{$x$ is not a number if $x\neq x$.}, we set $\texttt{Flag}_p(i,j)=.true.$
\item Then we loop over the quads $[x_i,x_{i+1}]\times [y_j, y_{j+1}]$. If for one of the 4 corners $(x_l,y_q)$, $\texttt{Flag}_p(l,q)=.true.$, we set  $\texttt{Flag}_e(i+1/2,j+1/2)=.true.$
\item  For each element such that $\texttt{Flag}_e(i+1/2,j+1/2)=.true.$, we recompute, for each sub-time step the four residuals defined by \eqref{residual:2}
\end{enumerate}

In \cite{clain,Vilar} is also a way to detect local extremas, and in \cite{Vilar} to differenciate the local smooth extremas from the discontinuities. We have not use this here, and there is way of improvement.
The first order version of our scheme amounts to global Lax-Friedrichs. To make sure that the first order scheme is domain invariant preserving, we can apply this procedure to each of the cycle of the DeC procedure, we have not done that in the numerical examples.

\section{Stability analysis}
\label{kinetic:sec_stability}
We will study the stability of the discretisation of the homogeneous problem.
As discussed in the previous section, since the matrices $\Lambda_x$  and $\Lambda_y$ are diagonal, it is enough to consider again the following transport equation
\begin{equation}
\dpar{f}{t} +a\dpar{f}{x} +b\dpar{f}{y}=0 ,
\end{equation}
Since $ab=0$ in the case of four waves model, we have the same results as one dimensional case \cite{Abgrall}. In other cases, 
we perform the Fourier analysis to evaluate the amplification factors of the method, first without
defect correction iteration, then with defect correction iteration. We
assume, without loss of generality, that $a,b>0$. we denote Fourier symbol of $\delta^x$ and $\delta^y$ as $g_1$ and $g_2$, respectively. For $\delta^x$ and $\delta^y$  operators, we considered four cases in the previous section, we have
\begin{itemize}
 \item First order in both $x$ and $y$: we have $g^{(1)}(\theta)=1-e^{i\theta}$ and $g_1=g^{(1)}(\theta_1)$, $g_2=g^{(1)}(\theta_2)$
  We see that $\Re(g^{(1)})\geq 0$ and $\max_\theta\vert g^{(1)}\vert = 4$.
 \item Second order in $x$ and $y$: $g_1=g^{(2)}(\theta_1)$ and $g_2=g^{(2)}(\theta_2)$ where 
 $$ g^{(2)}(\theta)=\frac{3}{2}-2e^{-i\theta}+\frac{e^{-2i\theta}}{2}.$$
 We notice that $\Re(g^{(2)})=\big ( \cos\theta-1\big )^2\geq 0$ and $\max_\theta\vert g^{(2)}\vert = 2$.
 \item Third order in both $x$ and $y$: $g_1=g^{(3)}(\theta_1)$ and $g_2=g^{(3)}(\theta_2)$ where 
$$ g^{(3)}=\frac{e^{-2i\theta}}{6}-e^{-i\theta}+\frac{1}{2}+\frac{e^{i\theta}}{3}.$$
Again, $\Re(g^{(3)})=\frac{1}{3}\big (\cos\theta-1\big )^2\geq 0$ and $\max_\theta\vert g^{(3)}\vert = \tfrac{3}{2}$
 \item Fourth order in both $x$ and $y$:  we only  consider the  case $r=1$, $s=2$. We have $g_1=g^{(4)}(\theta)$ and $g_2=g^{(4)}(\theta_2)$ where
 $$
 g^{(4)}(\theta-1)=-\frac{e^{-3i\theta}}{12}+\frac{e^{-2i\theta}}{2}-\frac{3}{2}e^{-i\theta}+\frac{5}{6}+\frac{e^{i\theta}}{4}.$$
 and we have $\Re(g^{(4)})=\frac{1}{3}\big (1-\cos\theta\big )^3\geq 0$ and $\max_\theta\vert g^{(4)}\vert = \frac{8}{3}$
\end{itemize}
Now, we consider first, second and fourth order approximations in time in the $L_2$ operator. In the sequel, we 
set $g=\mu g_1+\nu g_2$ with $\mu=a\frac{\Delta t}{\Delta x}$ and $\nu=b\frac{\Delta t}{\Delta y}$.
\begin{enumerate}
\item First order in time: Using Fourier transform, the $L_2$ operator can be
written as follows
\begin{equation*}
 \hat{f}^{n+1} -\hat{f}^{n}+\mu g_1  \hat{f}^{n+1}+\nu g_2  \hat{f}^{n+1}=0.
\end{equation*}
 The  amplification factor is 
$$G_1=\frac{1}{1+g}$$
In order to have a stable scheme for the first order scheme, we should have $\lvert G\rvert \leq 1$,  and a necessary and sufficient condition is  $2\Re(g)+|g|^2\geq 0$.

The defect correction iteration is written as 
\begin{equation*}
 \hat{f}^{(r+1)} =\hat{f}^{n}-\mu g_1  \hat{f}^{(r)}-\nu g_2  \hat{f}^{(r)}
\end{equation*}
The resulting formula for the amplification factor $G_{r+1}$ is given by
\begin{align*}
 G_{1,0}&=1 \\
 G_{,1r+1}(g)&=1-g G_{1,r}(g)
\end{align*}
We can observe that
\begin{equation}\label{G:1}
 G_{1,r+1}(g)-G_1(g)= (-1)^{r+1}g^{r+1}  \big (1-G_1(g)\big ).
\end{equation}
We note that $g^{r+1}\rightarrow 0$ if $\vert g\vert<1$.
\item Second order in time: We again use Fourier transform, and write $L_2$ as
\begin{equation*}
 \hat{f}^{n+1} -\hat{f}^{n}+\frac{\mu}{2}( g_1  \hat{f}^{n}+g_1  \hat{f}^{n+1})+\frac{\nu}{2} (g_2  \hat{f}^{n}+g_2  \hat{f}^{n+1})=0
\end{equation*}
In this case the amplification factor is
$$G_2=\frac{1-\frac{g }{2}}{1+\frac{g }{2} }$$
We conclude that under the following condition stability holds
$$\text{Re}(g)\geq 0$$
The defect correction iteration reads
\begin{equation*}
 \hat{f}^{(r+1)}= \hat{f}^{n}-\frac{\mu}{2}( g_1  \hat{f}^{n}+g_1  \hat{f}^{(r)})-\frac{\nu}{2} (g_2  \hat{f}^{n}+g_2  \hat{f}^{(r)})
\end{equation*}
we have
\begin{align*}
 G_{2,0}&=1 \\
 G_{2,r+1}(g)&=1-\frac{g}{2}-\frac{g}{2}G_{2,r}(g)
\end{align*}
It is easy to check that
\begin{equation}\label{G:2}
 G_{2,r+1}(g)-G_3(g)= (-1)^{r+1}\big (\frac{g}{2}  \big )^{r+1}  \big (1-G_2(g)\big ).
\end{equation}
We note that $\big (\frac{g}{2}  \big )^{r+1}\rightarrow 0$ if $\vert g\vert g\leq 2$.
\item Fourth order in time: Similarly, we have the following formula for $L_2$ operator
\begin{equation}
\label{eq:23}
\begin{split}
 \hat{f}^{n+\frac{1}{2}}&-\hat{f}^{n}+\mu (\frac{5}{24}g_1 \hat{f}^{n}+\frac{1}{3}g_1 \hat{f}^{n+\frac{1}{2}}-\frac{1}{24}g_1 \hat{f}^{n+1})+\nu(\frac{5}{24}g_2 \hat{f}^{n}+\frac{1}{3}g_2 \hat{f}^{n+\frac{1}{2}}-\frac{1}{24}g_2 \hat{f}^{n+1})=0\\
  \hat{f}^{n+1}&-\hat{f}^{n}+\mu (\frac{1}{6}g_1 \hat{f}^{n}+\frac{2}{3}g_1 \hat{f}^{n+\frac{1}{2}}+\frac{1}{6}g_1 \hat{f}^{n+1})+\nu(\frac{1}{6}g_2 \hat{f}^{n}+\frac{2}{3}g_2 \hat{f}^{n+\frac{1}{2}}+\frac{1}{6}g_2 \hat{f}^{n+1})=0
  \end{split}
\end{equation}
We can rewrite \eqref{eq:23} in matrix form
\begin{equation*}
 \begin{pmatrix}
	\hat{f}^{n+\frac{1}{2}}  \\
	\hat{f}^{n+1}	
\end{pmatrix}=G_4\begin{pmatrix} 
\hat{f}^{n}\\ \hat{f}^{n}
\end{pmatrix}   
\end{equation*}
where, setting $\theta=24+12g+2g^2$
\begin{equation*}
\begin{split}
 G_4(g)&=
 \begin{pmatrix}
	1 +\frac{g }{3} &-\frac{g }{24}\\
	\frac{2g }{3}	& 1 +\frac{g }{6}
\end{pmatrix}^{-1}\begin{pmatrix} 
1-\frac{5g }{24}\\ 1-\frac{g}{6}
\end{pmatrix}\\ 
&\quad =\frac{1}{2\theta}
\begin{pmatrix} 
{24-g^2}\\
{24-12g+2g^2}\\
\end{pmatrix}
\end{split}
\end{equation*}
In order to have a stable scheme, one should have $\max\{\lvert G_1\rvert,\lvert G_2\rvert\}\leq 1$.
The defect correction iteration reads
\begin{equation}
\label{eq:24}
\begin{split}
 \hat{h}_1^{(r+1)}&=\hat{f}^{n}-\mu (\frac{5}{24}g_1 \hat{f}^{n}+\frac{1}{3}g_1 \hat{h}_1^{(r)}-\frac{1}{24}g_1 \hat{h}_2^{(r)})-\nu(\frac{5}{24}g_2 \hat{f}^{n}+\frac{1}{3}g_2 \hat{h}_1^{(r)}-\frac{1}{24}g_2 \hat{h}_2^{(r)})\\
  \hat{h}_2^{(r+1)}&=\hat{f}^{n}-\mu (\frac{1}{6}g_1 \hat{f}^{n}+\frac{2}{3}g_1 \hat{h}_1^{(r)}+\frac{1}{6}g_1 \hat{h}_2^{(r)})-\nu(\frac{1}{6}g_2 \hat{f}^{n}+\frac{2}{3}g_2 \hat{h}_1^{(r)}+\frac{1}{6}g_2 \hat{h}_2^{(r)})
  \end{split}
\end{equation}
Rewriting \eqref{eq:24} in matrix form, we obtain
\begin{equation*}
\hat{h}^{(r+1)}=\begin{pmatrix}
   1- \frac{5g}{24}\\1- \frac{g }{6}
\end{pmatrix} -gM
\hat{h}^{(r)}
\end{equation*}
where $M=\begin{pmatrix}
  \frac{1}{3}  & \frac{-1}{24}\\ \frac{2}{3} & \frac{1}{6}
\end{pmatrix}$. We get
\begin{align*}
 G_{4,0}&=\begin{pmatrix}
   1 \\1 
 \end{pmatrix} \\
 G_{4,r+1}(g)&=\begin{pmatrix}
   1- \frac{5g }{24}\\1- \frac{g }{6}
   \end{pmatrix} -gM G_{4,r}(g)
\end{align*}
Hence
\begin{equation}\label{G:4}
 G_{4,r+1}(g)-G_4(g)= (-1)^{r+1}(g)^{r+1} M^{r+1} \Bigg(\begin{pmatrix}
   1 \\1 
 \end{pmatrix}-G_4(g) \Bigg).
\end{equation}
We note that $(-1)^{r+1}(g)^{r+1} M^{r+1}\rightarrow 0$ if $\vert g\vert \Vert M\Vert_2<1$, i.e. if $\vert g\vert \leq \big (\tfrac{337+\sqrt{104353}}{1152}\big )^{-1}\approx 1.745356304$.
\end{enumerate}
{\color{red}Using this relations, we have plotted the zone $\mathcal{A}_G$ where $\vert G\vert \leq 1$ for the second order scheme (Crank Nicholson with DeC iteration) and $\max(\vert G(1)\vert , \vert G(2)\vert) <1$. To get a stable scheme, we need that part of $\mathcal{A}_G$ has a non empty intersection with $\{(x,y), x\geq 0\}$. We have plotted on Figure \ref{A} $\mathcal{A}_G$ for the second order Dec with 3 and 4 iterations, and for the 4th order DeC with 4 and 5 iterations.
\begin{figure}[h]
\begin{center}
\subfigure[]{\includegraphics[width=0.45\textwidth]{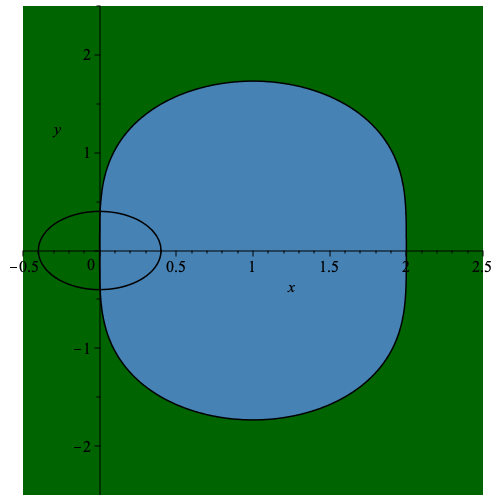}}
\subfigure[]{\includegraphics[width=0.45\textwidth]{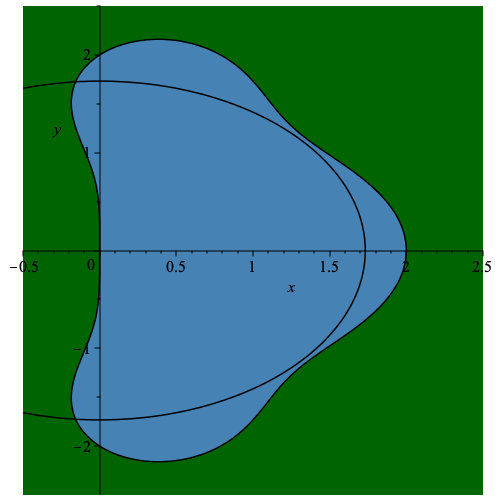}}
\subfigure[]{\includegraphics[width=0.45\textwidth]{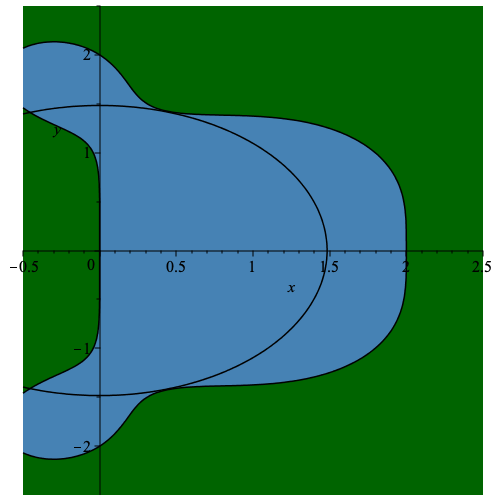}}
\subfigure[]{\includegraphics[width=0.45\textwidth]{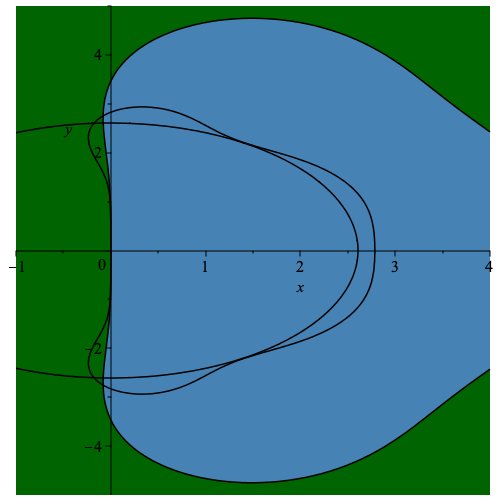}}
\subfigure[]{\includegraphics[width=0.45\textwidth]{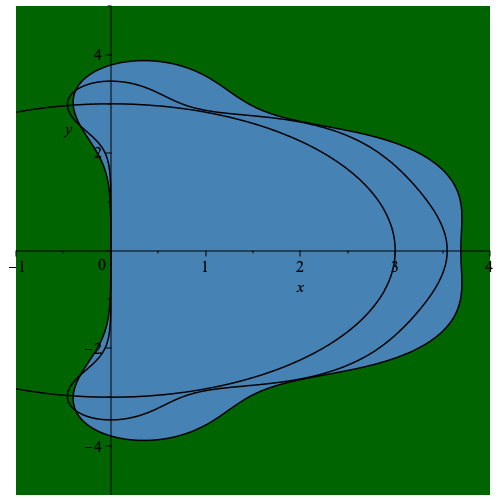}}
\subfigure[]{\includegraphics[width=0.45\textwidth]{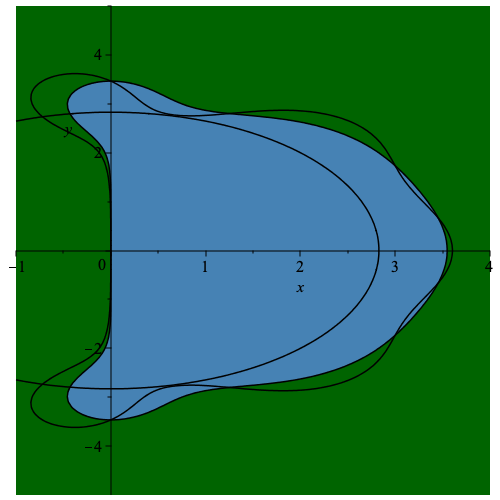}}
\end{center}
\caption{\label{A} Plots of the stability region   with the circle of radius $\rho$ for DeC 2nd order (a): $\rho_2^2=0.4$, 2 iterations, (b): $\rho_3^2=\sqrt{3}$, 3 iterations, (c): $\rho_4^2=\sqrt{2.2}$, 4 iterations and DeC 4th order (d): $\rho_4^4=\sqrt{6.8}$, 4 iterations, (e): $\rho_5^3=3$, 5 iterations, (f): $\rho_6^3=2\sqrt{2}$, 6 iterations.  }
\end{figure}
We see that the larger $\rho$ is achieved for $r+1$ iterations for scheme of order $r$.

From \eqref{G:1}-\eqref{G:4}, the amplification factor of the two dimensional scheme is $G_k^r\big (\nu g(\theta_1)+\mu g(\theta_2)\big )$, so we need
to check when $\vert \nu g(\theta_1)+\mu g(\theta_2)\vert\leq \rho_k^r$. Since $\vert \nu g(\theta_1)+\mu g(\theta_2)\vert\leq 2 \max(|\mu|, |\nu|)\max_\theta\vert g(\theta)\vert |$, we display $\max(|\mu|, |\nu|)$ in Table \ref{TableA}. 
\begin{table}[h]
\begin{center}
\begin{tabular}{|c||c|c|c||}
\hline
& DeC(2,2)& DeC(2,3)& DeC(2,4)\\
\hline
$g^{(2)}$&$0.4$&$2/\sqrt{3}\approx 1.15$&$\sqrt{2.2}\approx 1.58$ \\\hline
& DeC(4,4)& DeC(4,5)& DeC(4,6)\\ \hline
$g^{(4)}$ &$0.75\sqrt{6.8}\approx 1.95$ & $2.25$ & $1.5\sqrt{2}\approx 2.12$
\\
\hline
\end{tabular}
\end{center}
\caption{\label{TableA} CFL condition for the scheme DeC(r,p): r is the order, p the number of iteraion}
\end{table}
 Using this amplification factors, we obtain the maximum values of the CFL number for which the various options leads to stability. Even with the BGK source term, the schemes are always stable for $CFL=1+\delta$, $\delta>0$. For example, the combination 4th order in time/4th ordre in space is stable for $CFL\approx 1.3$, and this can be checked experimentally on the vortex case bellow, with periodic boundary conditions. For that case, we have not ben able to run higher that CFL 1.3, it may be an effect of the BGK source term. This results are also consistent with  \cite{Abgrall}.
 }
\section{Test cases}
\label{kinetic:sec_testcases}
In this section we will present the numerical results that illustrate the behavior of our scheme on scalar problems and Euler equations. To evaluate the accuracy and robustness of the proposed high order asymptotic preserving kinetic scheme, in the following 
we perform the convergence analysis for the scalar problems and study several benchmark problems for the Euler equations of gas dynamics.
\subsection{Scalar problems}
Consider the two-dimensional advection equation:
\begin{equation*}
\frac{\partial u}{\partial t}+ \dpar{u}{x}+ \dpar{u}{y}  =0,~ (x,y,t)\in [-2,2]\times[-2,2]\times \mathbb{R}^{+},
\end{equation*}
and periodic boundary conditions. We consider the following initial condition:
\begin{equation*}
u_0(x,y)=\sin(\pi x+\pi y),~ (x,y) \in (-2,2) \times (-2,2).
\end{equation*}
The CFL number is set to 1. The convergence for the density is shown in Tables \ref{tab:adv_ord2} and \ref{tab:adv_ord4} for  final time $T = 10$ for orders 2 and 4, which result in the predicted convergence rates of second and fourth order, respectively.





\begin{table}[H]
  \centering
  \caption{Convergence study for the advection equation for order 2 at $T=10$.} 
\begin{tabular}{|c|c c c c  c c |} 
\hline
h & $L^1$-error &slope &  $L^2$-error& slope& $L^{\infty}$-error & slope\\
\hline
0.05& $1.0698.10^{+1}$  & - &  $2.8479.10^{0}$ &- &$9.3878.10^{-1}$ &  -   \\

0.025& $3.5595.10^{0}$  & 1.59 & $9.6212.10^{-1}$  &1.57 & $3.3039.10^{-1}$&  1.51   \\

0.0125& $6.8578.10^{-1}$  & 2.38 & $1.8812.10^{-1}$  & 2.35&$6.5662.10^{-2}$ & 2.33    \\

0.00625& $1.4701.10^{-1}$  & 2.22 &  $4.0558.10^{-2}$ &2.21 &$1.4243.10^{-2}$ &  2.20   \\

0.003125& $3.4890.10^{-2}$
  & 2.08 & $9.6578.10^{-3}$  &2.07 & $3.4037.10^{-3}$&  2.07   \\
\hline
\end{tabular}
	\label{tab:adv_ord2}
\end{table}

\begin{table}[H]
  \centering
  \caption{Convergence study for the advection equation for order 4 at $T=10$.} 
\begin{tabular}{|c|c c c c  c c |} 
\hline
h & $L^1$-error &slope &  $L^2$-error& slope& $L^{\infty}$-error & slope\\
\hline
0.05&$4.7601.10^{0}$ &   -&$1.2919.10^{0}$  & -  &$4.1702.10^{-1}$ &     - \\

0.025&$3.1678.10^{-1}$ & 3.91  & $8.5482.10^{-2}$ & 3.92  &$2.9212.10^{-2}$ &   3.84     \\

0.0125&  $1.8698.10^{-2}$ & 4.08 &  $5.1232.10^{-3}$ &4.06 &$1.7850.10^{-3}$ & 4.03    \\

0.00625& $1.1427.10^{-3}$  & 4.03 & $3.1527.10^{-4}$  & 4.02&$1.1072.10^{-4}$ & 4.01    \\

0.003125& $7.0804.10^{-5}$  & 4.01 &   $1.9599.10^{-5}$& 4.01&$6.9070.10^{-6}$ &4.00     \\
\hline
\end{tabular}
	\label{tab:adv_ord4}
\end{table}
\subsection{Euler equations}
In this section we first test our scheme on the following Euler equations in 2D
\begin{equation}
\label{results_euler}
\frac{\partial \bu}{\partial t}+ \dpar{\bA_1(\bu)}{x}+ \dpar{\bA_2(\bu)}{y}  =0.
\end{equation}
where
$$\bu=(\rho, \rho \bv, E) \text{ and }\bA=(\rho \bv, \rho \bv\otimes\bv+p\text{Id }, (E+p)\bv)=(\bA_1,\bA_2),$$
We run some standard cases: the isentropic case,  the Sod problem and a strong shock tube.
\subsubsection{Isentropic vortex}
The first considered test case is the isentropic case. The boundary conditions are periodic.
The initial conditions are given by
\begin{align*}
\rho &= \left[ 1 - \frac{(\gamma -1)\beta^2}{32\gamma \pi^2}\exp \big(1 - r^2\big) \right]^{\frac{1}{\gamma-1}}, \\
v_x &= 1 - \frac{\beta}{4\pi}\exp\left(\frac{1-r^2}{2}\right)(y-y_c), \\
v_y &= \frac{\sqrt{2}}{2}+ \frac{\beta}{4\pi}\exp\left(\frac{1-r^2}{2}\right)(x-x_c), \\
p &= \rho^\gamma,
\end{align*}
where $\gamma = 1.4$, $\beta=5$ and $r=\sqrt{(x-x_c)^2+(y-y_c)^2}$. The computational domain is a square $[-10, 10]\times[-10,10]$. Also, the free stream conditions are given by:
$$\rho_{\infty}=1,\quad v_{x,\infty}=1,\quad v_{y,\infty}=\frac{\sqrt{3}}{2},\quad p_{\infty}=1.
$$
The $y$-velocity is chosen such that the particle trajectories never coincide to a mesh point, if one start from a grid point initially.
The final time is first set to $T=5$ for a convergence study (because of the cost mostly). The center of the vortex is set in $(x_c,y_c)=(Tv_{x,\infty},Tv_{y,\infty})$, modulo $20$.

The reference
solution is obtained on a regular Cartesian mesh consisting of $100\times 100$ elements and 4th order scheme in space and time. The CFL number is set to 1, and we consider the four waves model. In Figs. \ref{fig:vortex_p}, \ref{fig:vortex_density} and \ref{fig:vortex_velocity}, we have displayed the pressure, density and norm of the velocity at $T=5$, respectively. We can observe that the plots show a very good behavior of the numerical
scheme. The obtained convergence curves for the three
 considered error norms are displayed in Fig. \ref{fig:vortex_error} and show an excellent correspondence to the theoretically
predicted order.
\begin{figure}[H]
\begin{center}
\subfigure[4-th order scheme in space and time]
{\includegraphics[width=0.45\textwidth]{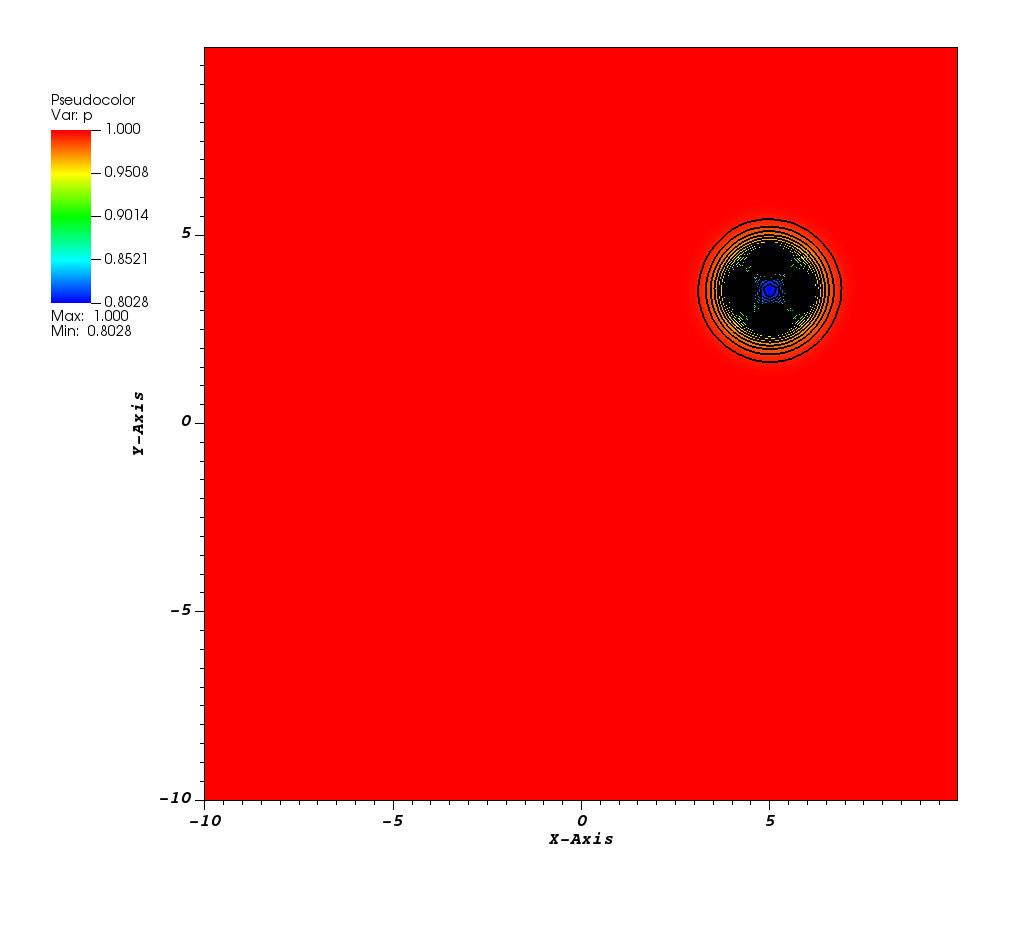}}
\subfigure[Exact]
{\includegraphics[width=0.45\textwidth]{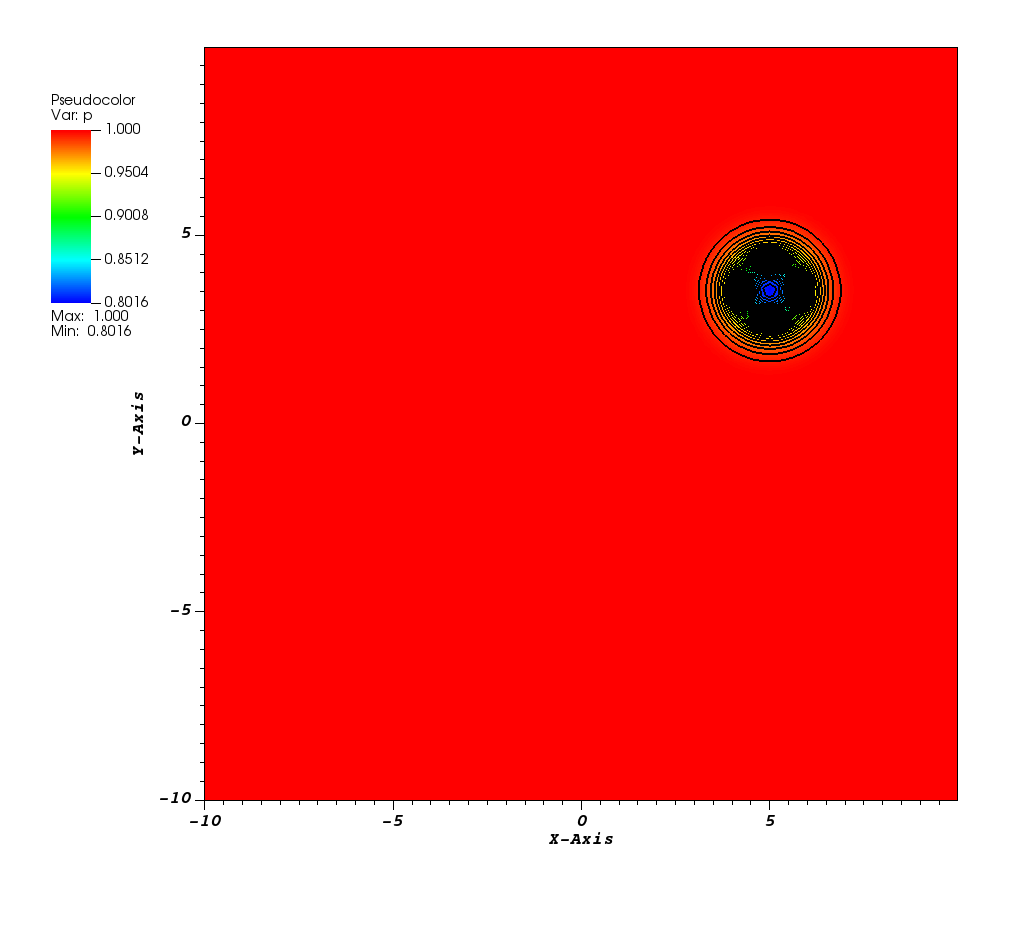}}
\end{center}
\caption{Plot of the pressure for the vortex problem at $T=5$.}
\label{fig:vortex_p}
\end{figure}

\begin{figure}[H]
\begin{center}
\subfigure[4-th order scheme in space and time]
{\includegraphics[width=0.45\textwidth]{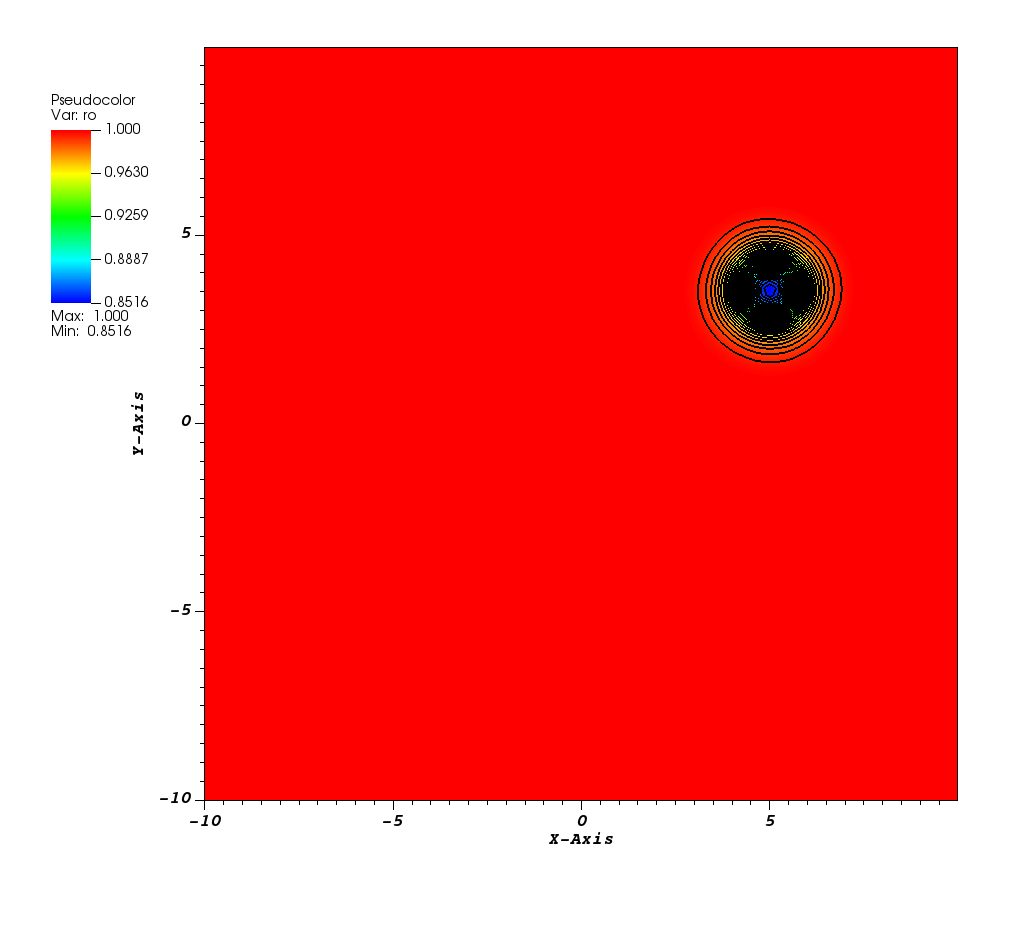}}
\subfigure[Exact]
{\includegraphics[width=0.45\textwidth]{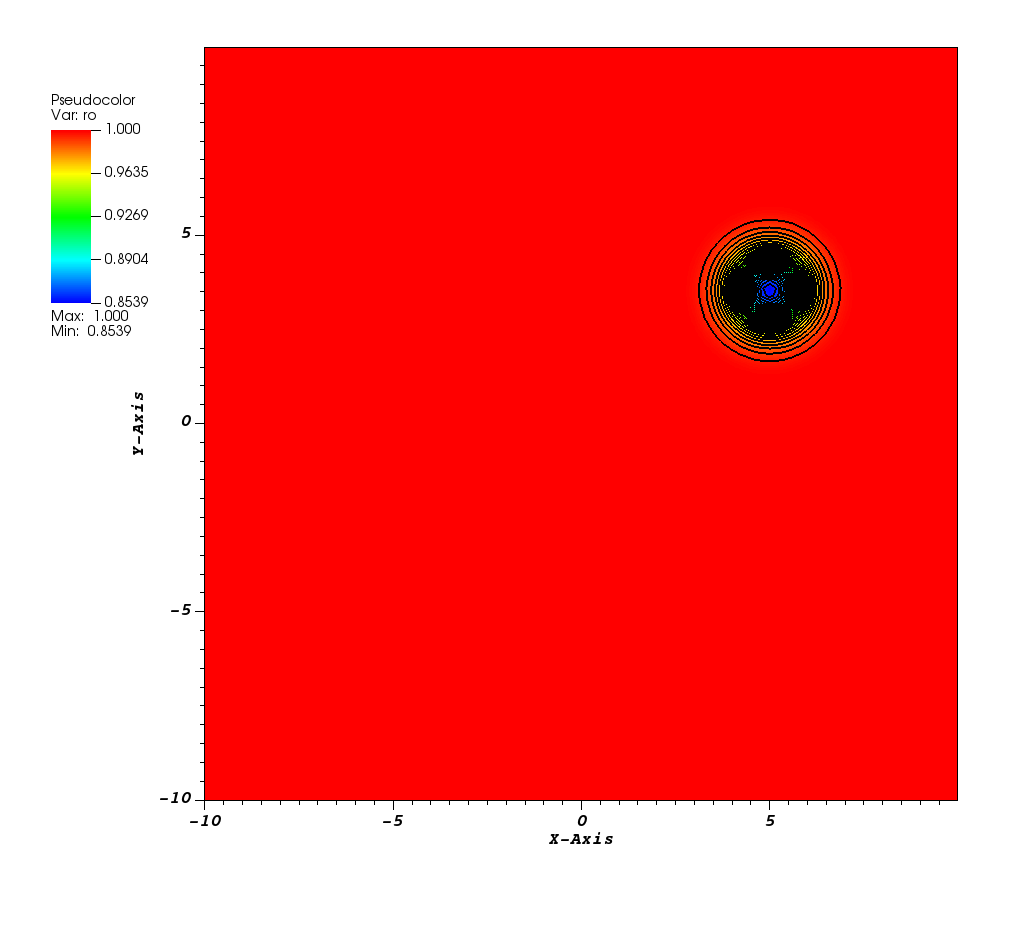}}
\end{center}
\caption{Plot of  the density for the vortex problem at $T=5$.}
\label{fig:vortex_density}
\end{figure}

\begin{figure}[H]
\begin{center}
\subfigure[4-th order scheme in space and time]
{\includegraphics[width=0.45\textwidth]{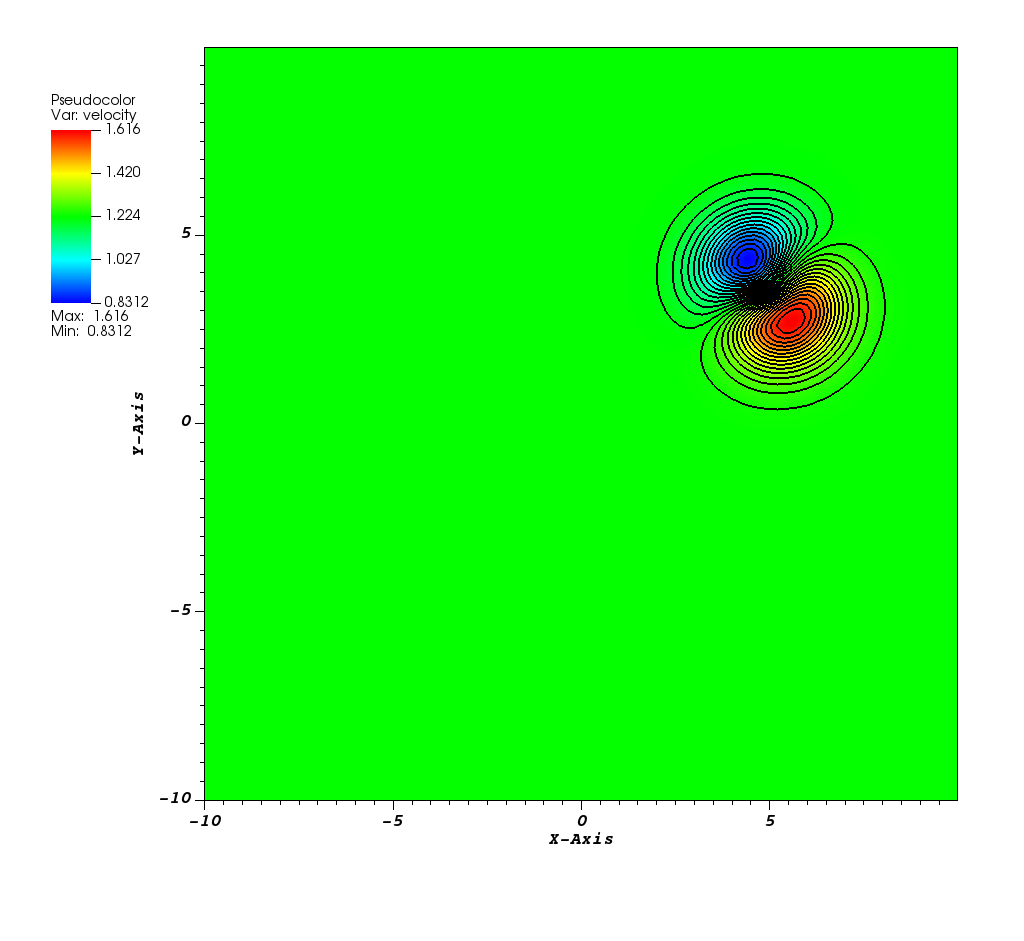}}
\subfigure[Exact]
{\includegraphics[width=0.45\textwidth]{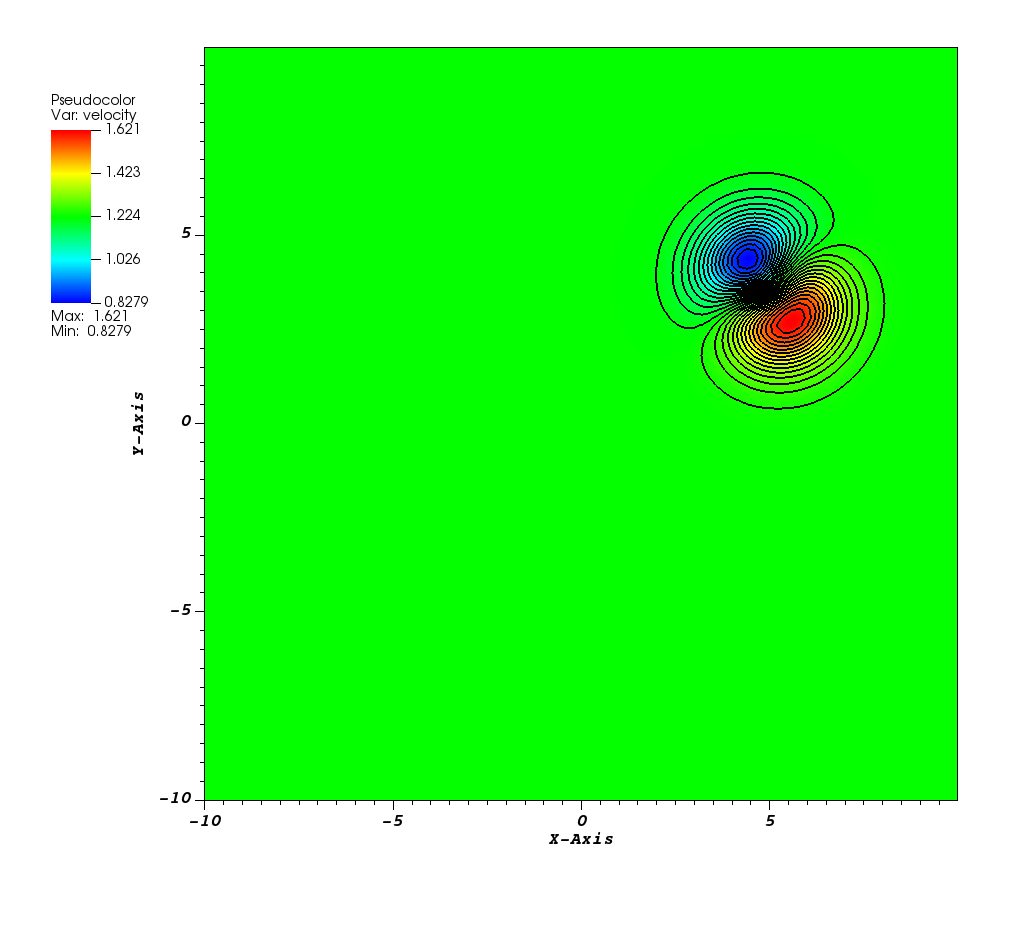}}
\end{center}
\caption{Plot of the  norm of the velocity for the vortex problem at $T=5$.}
\label{fig:vortex_velocity}
\end{figure}

\begin{figure}[H]
\begin{center}
{\includegraphics[width=0.45\textwidth]{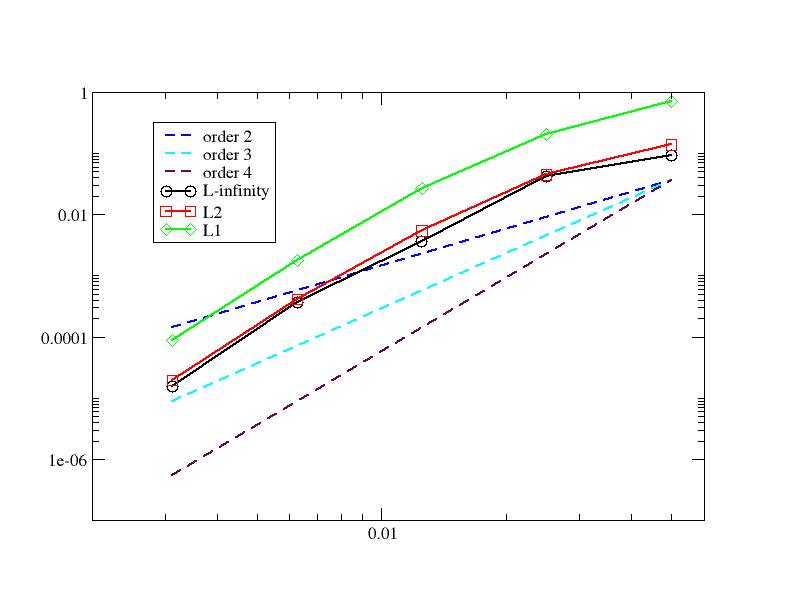}}
\end{center}
\caption{Convergence plot of density for the fourth order scheme in space and time  at $T=5$.}
\label{fig:vortex_error}
\end{figure}

In order to illustrate the long time behavior of the scheme, we show the pressure for $T=200$  and the error between the computed pressure and the exact one on Fig. \ref{vortex:200} and a $200\times 200$ grid. Note that the typical time for a vortex to travel across the domain is about $10$.
\begin{figure}[H]
\begin{center}
    \subfigure[$p$]{\includegraphics[width=0.45\textwidth]{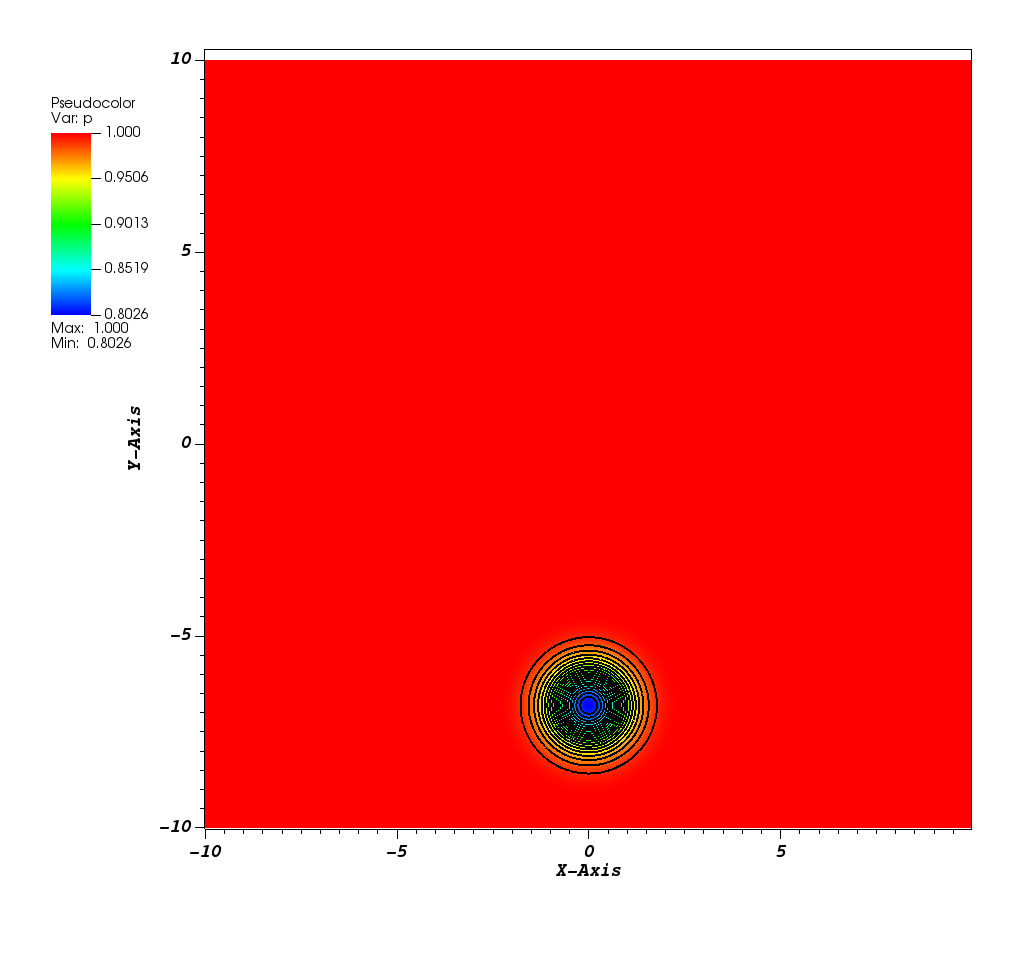}}
    \subfigure[error]{\includegraphics[width=0.45\textwidth]{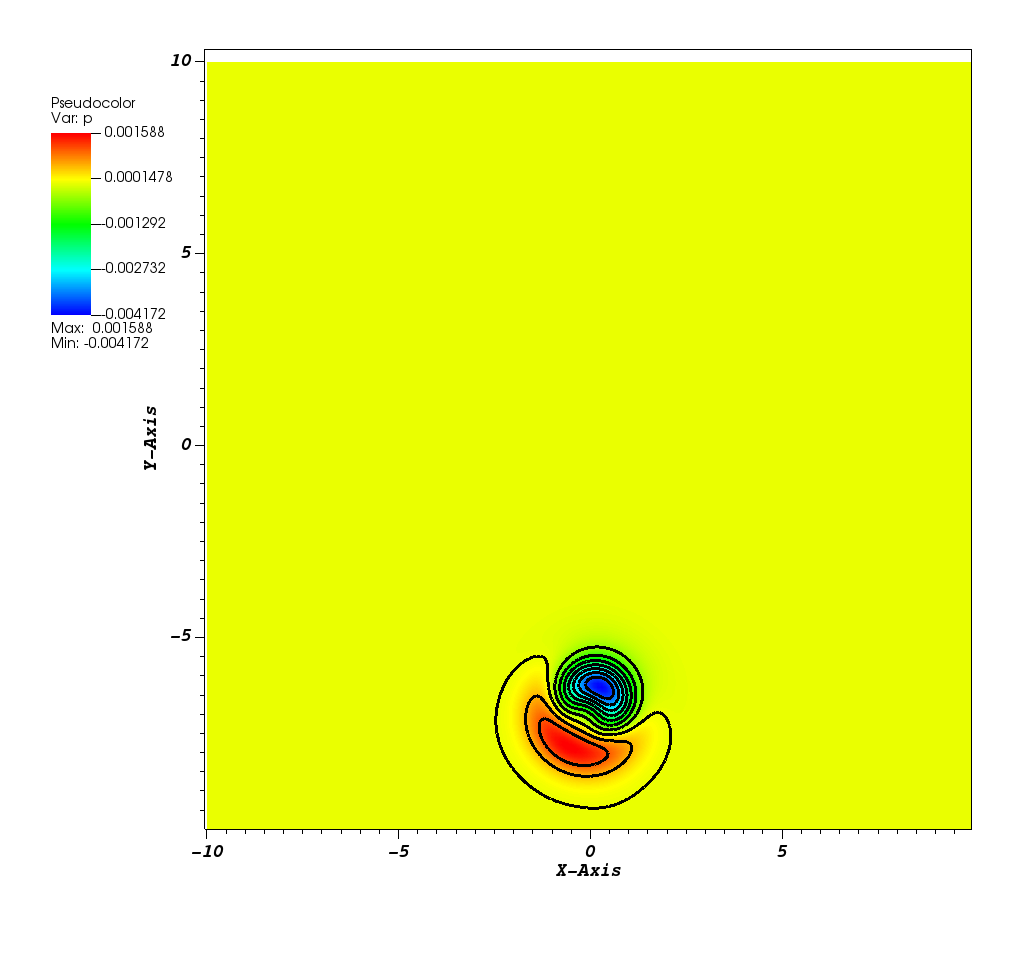}}
\end{center}
\caption{\label{vortex:200} Pressure and error between the computed solution and the exact one at $T=200$ on a $200\times 200$ grid. We have $p_{i,j}-p^{ex}_{i,j}\in [-4.2\; 10^{-3}, 1.6\; 10^{-3}]$.}
\end{figure}
{\color{red}
\begin{remark}[About the stability condition]
In this paper, we have focussed our attention to simulations with CFL=1. However, the stability analysis suggests that higher CFL can be used. In the case of the vortex case, using 5 iteration, we have been able to run this case, up to $T=200$ with CFL=1.2. This is smaller than what is suggested by table \ref{stabilityDeC}. In this table, only the convection operator is considered, and we are not able to make an analysis where the source term is also included. It seems that the constraints are more severe than those suggested by the linear stability analysis.
\end{remark}
}
\subsubsection{Sod test case}
Further, we have tested our high order kinetic scheme on a well-known 2D Sod benchmark problem.  This test is again solving Euler
equation \eqref{results_euler}. The domain is a square $[-1,1]\times [-1,1]$. The initial conditions
are given by
\begin{equation*}
    (\rho _0,v_{x,0},v_{y,0},p_0) =  \begin{cases}
    ( 1 , 0 , 0 , 1 ),& \text{if } r \leq 0 . 5 , \\
    ( 0 . 125 , 0 , 0 , 0 . 1 ),& \text{otherwise.}
  \end{cases}
\end{equation*}
 and the boundary conditions are periodic. The final time is $T=0.16$  and the CFL number is set to 1. The two stabilisation methods have been tested and compared. The results for the limitation method are in  Fig. \ref{fig:limit}, while the ones obtained with the MOOD method are displayed on Fig. \ref{fig:mood}. The two methods provide almost identical results. However, the MOOD method, for this case, never activates the first order scheme, hence the results are obtained with the 4th order scheme. One can observe overshoots and undershoots at the shock, not strong enough to activate the first order scheme. This drawback could be cured if one activates, in the MOOD method, the extrema detection procedure of \cite{clain} or \cite{Vilar}. When the limitation method is used, one can observe that the overshoot do not exist any more, while the undershoot are less important but still existing.
\begin{figure}[H]
\begin{center}
\subfigure[p]
{\includegraphics[width=0.45\textwidth]{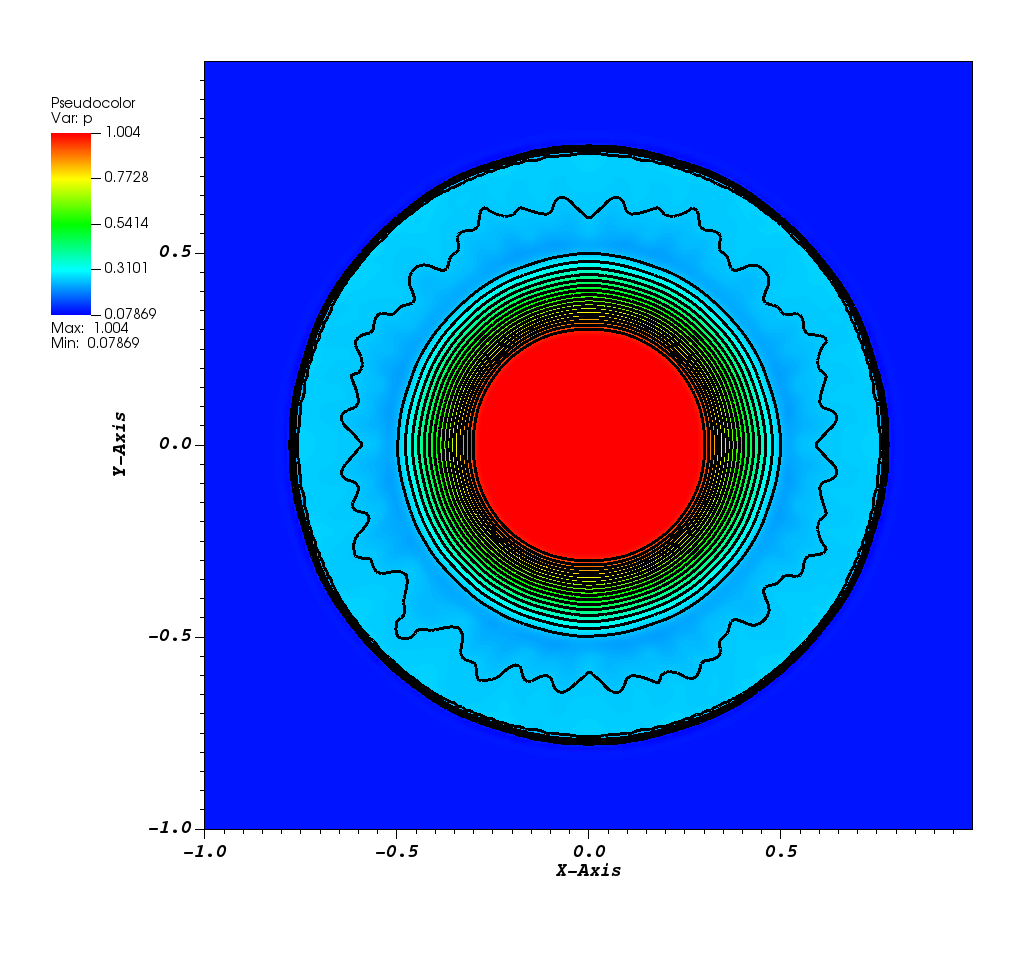}}
\subfigure[$\rho$]
{\includegraphics[width=0.45\textwidth]{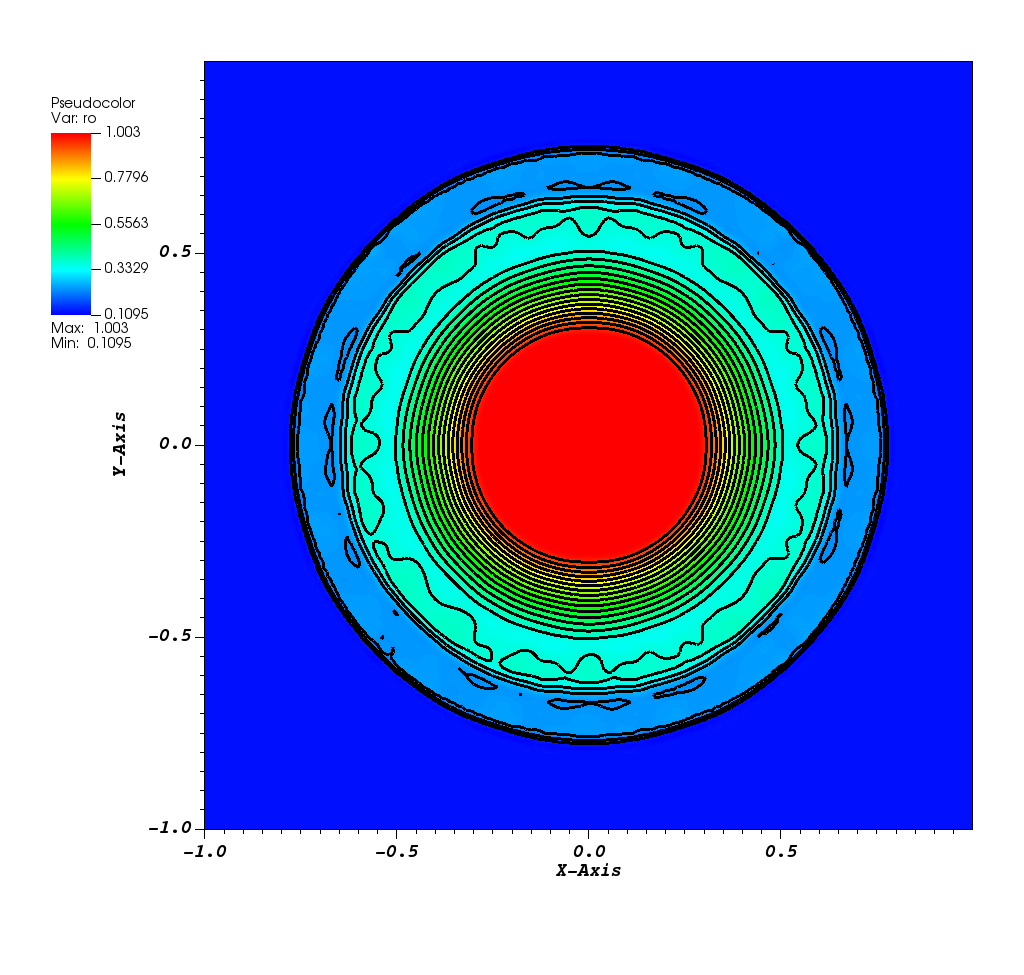}}
\subfigure[$\Vert \bv\Vert$]{\includegraphics[width=0.45\textwidth]{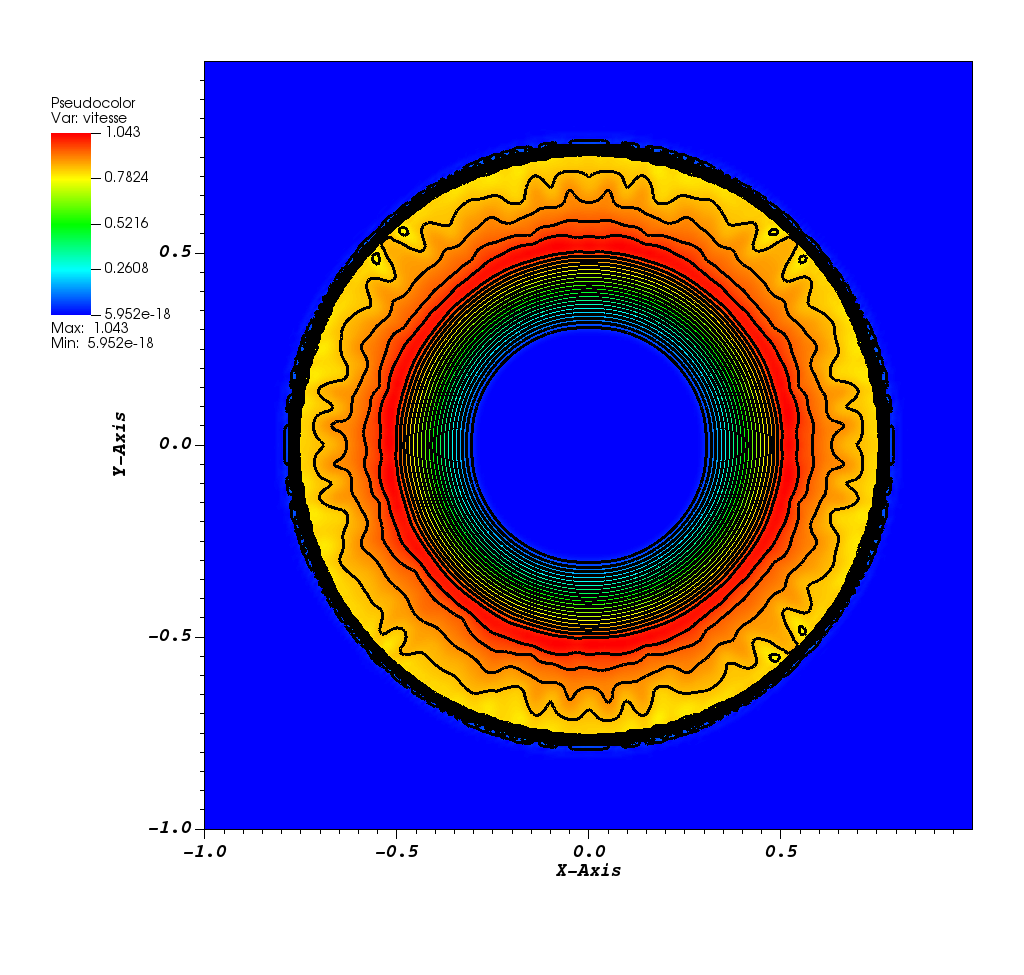}}
\end{center}
\caption{Sod problem, $T=0.16$ on a $200\times 200$ mesh with the limitation method.}
\label{fig:limit}
\end{figure}

\begin{figure}[H]
\begin{center}
\subfigure[p]
{\includegraphics[width=0.45\textwidth]{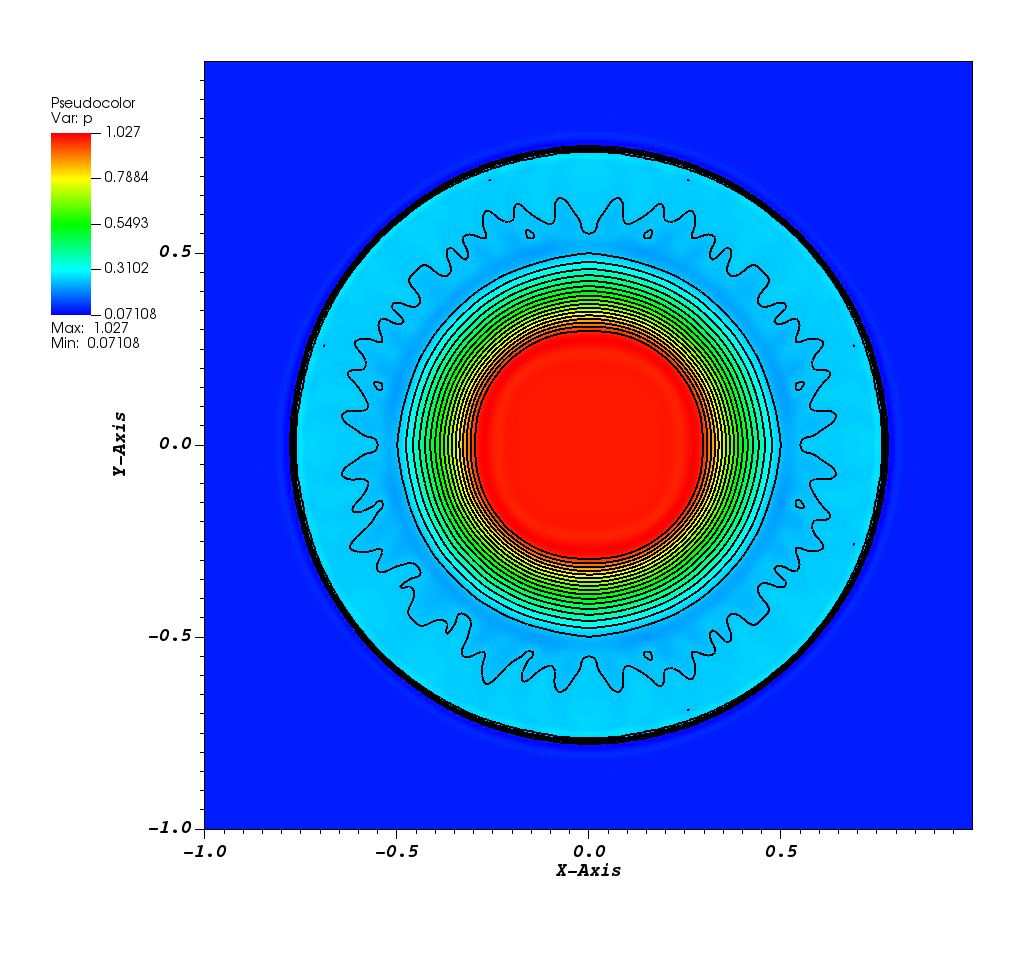}}
\subfigure[$\rho$]
{\includegraphics[width=0.45\textwidth]{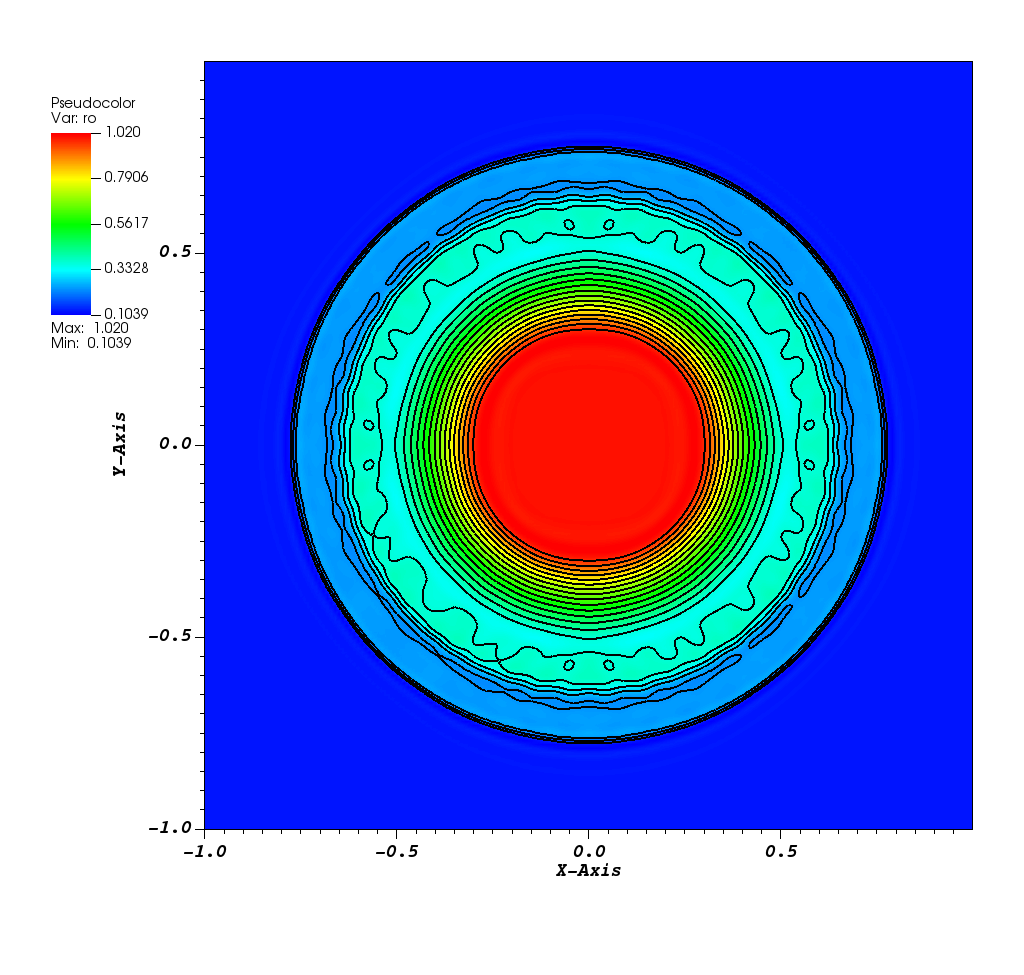}}
\subfigure[$\Vert \bv\Vert$]{\includegraphics[width=0.45\textwidth]{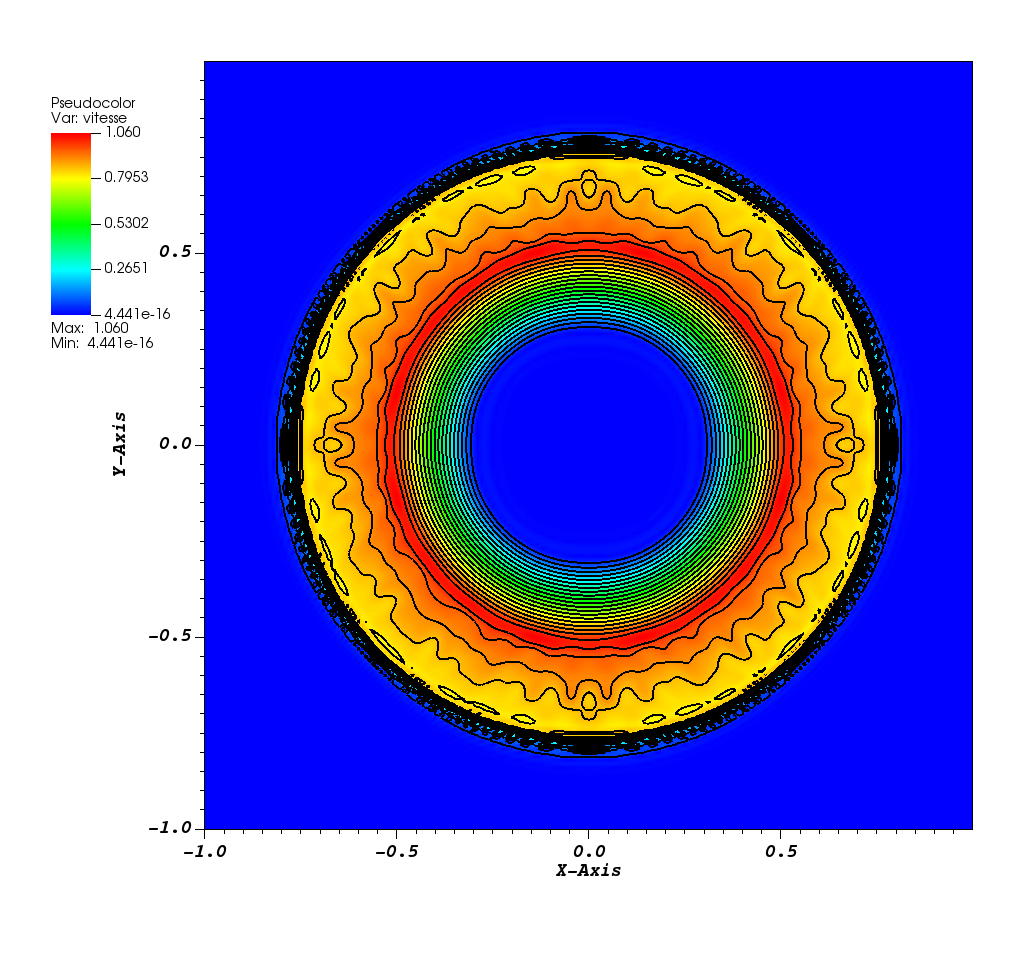}}
\end{center}
\caption{Sod problem, $T=0.16$ on a $200\times 200$ mesh with the MOOD method.}
\label{fig:mood}
\end{figure}
\subsubsection{Strong shock}
The problem is defined on $[-1.5,1.5]\times [-1.5,1.5]$ for $T=0.025$. We had to use the MOOD technique to get the results, the shocks are too strong.
\begin{equation}
\label{strong}
(\rho _0,v_{x,0},v_{y,0},p_0)=\left \{
\begin{array}{ll}
(1,0,0,1000)& \text{ if } r \leq 0.5\\
(1,0,0,1)& \text{else.}
\end{array}
\right .
\end{equation}
The pressure, density and norm of the velocity are displayed in Fig. \ref{StrongShock}, for the final time. The simulation is done with CFL$=1$ on a $200\times 200$ grid. On Fig. \ref{iso_lines}, we show the iso-lines of the density (mostly to localize the strong features of the solution) and the elements where the formal accuracy is dropped to first order. These flagged elements are moving in time, and are always localized around the discontinuities of the solution. In most cases, only a very few elements are flagged.
\begin{figure}[H]
\begin{center}
\subfigure[p]{\includegraphics[width=0.45\textwidth]{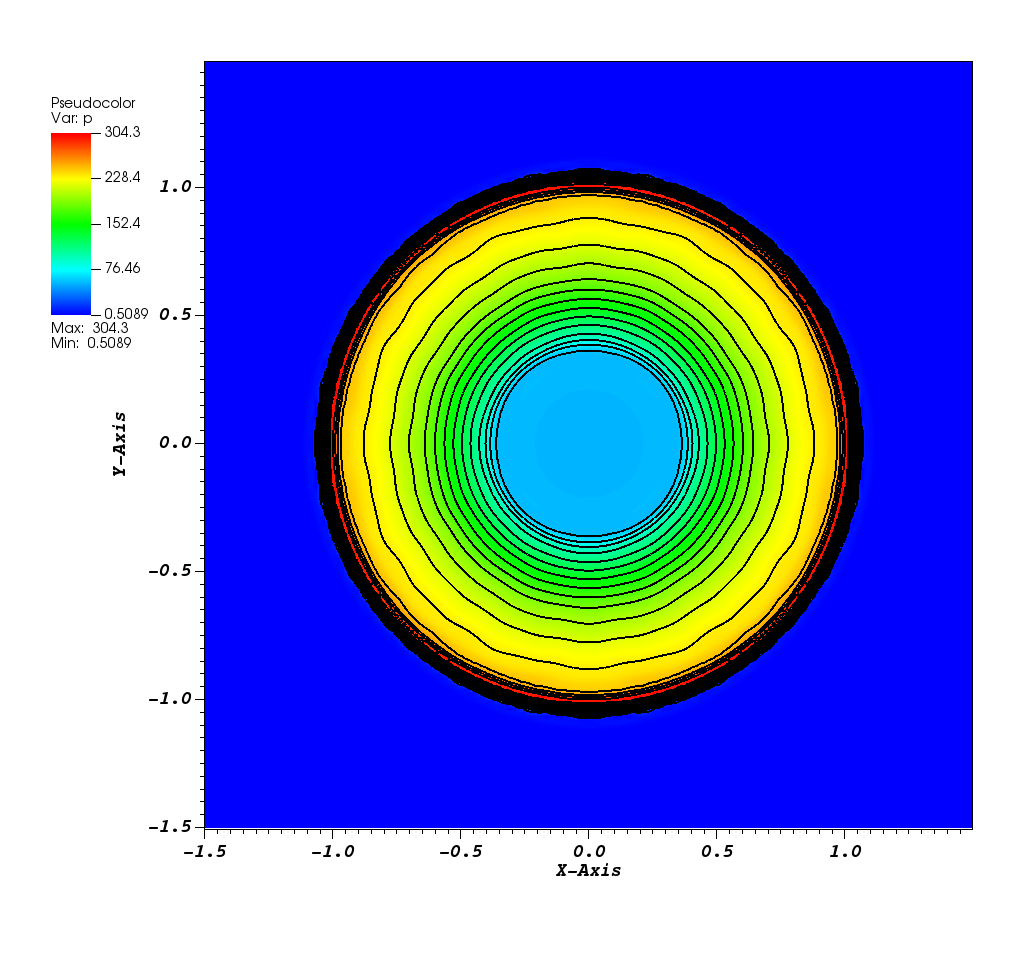}}
\subfigure[$\rho$]{\includegraphics[width=0.45\textwidth]{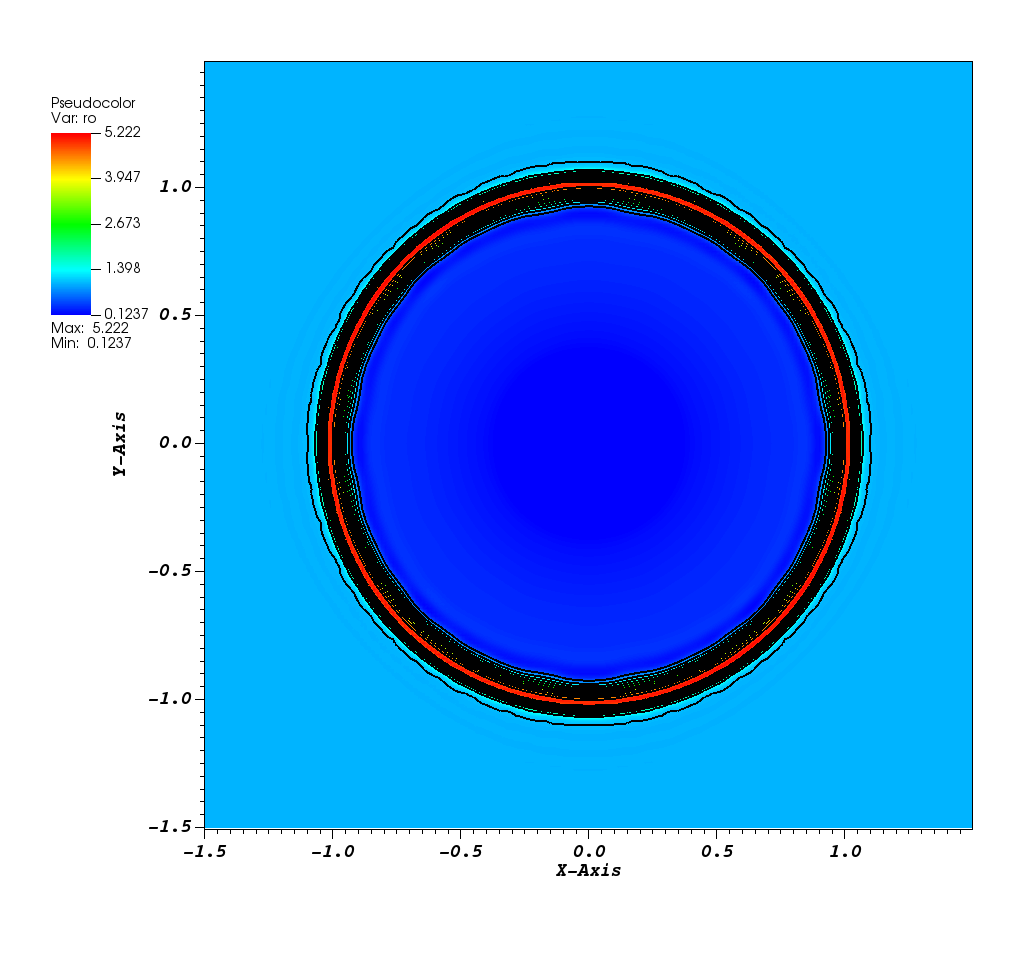}}
\subfigure[$\Vert\bv\Vert$]{\includegraphics[width=0.45\textwidth]{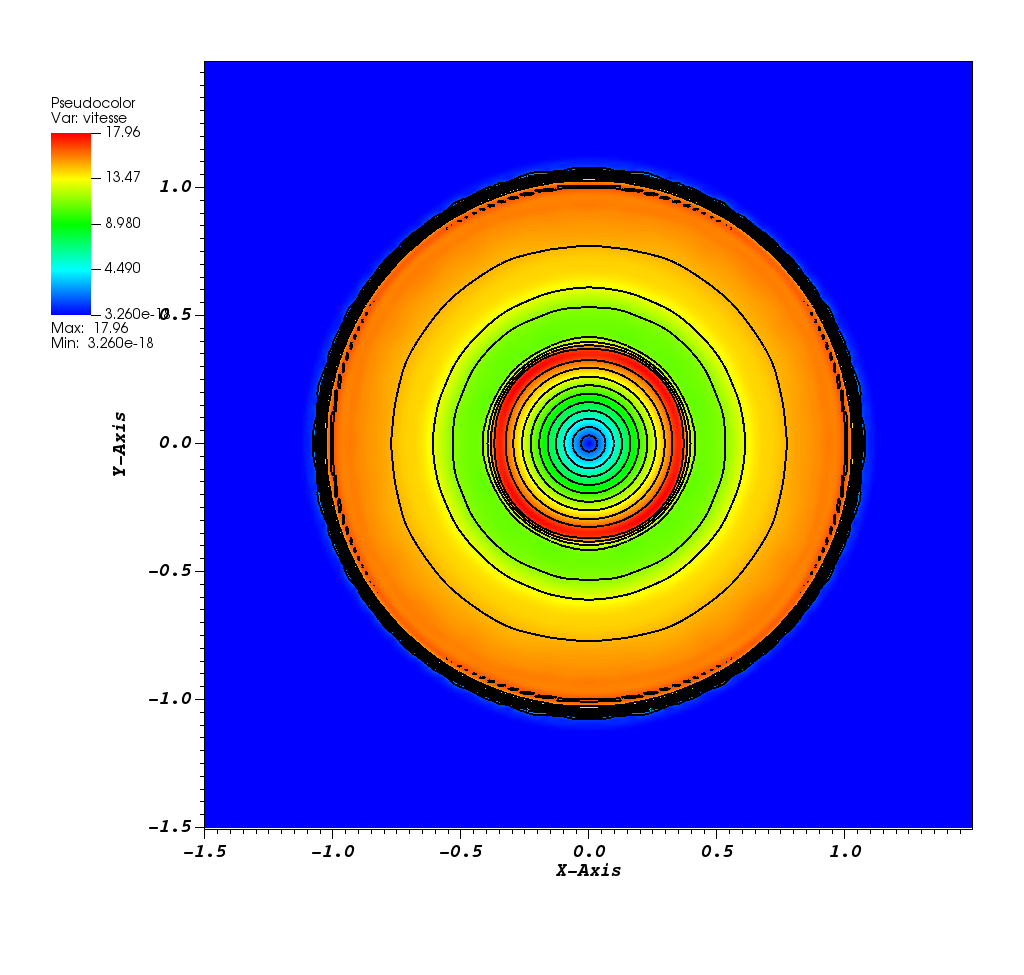}}
\subfigure[flag+$\rho$]{\includegraphics[width=0.45\textwidth]{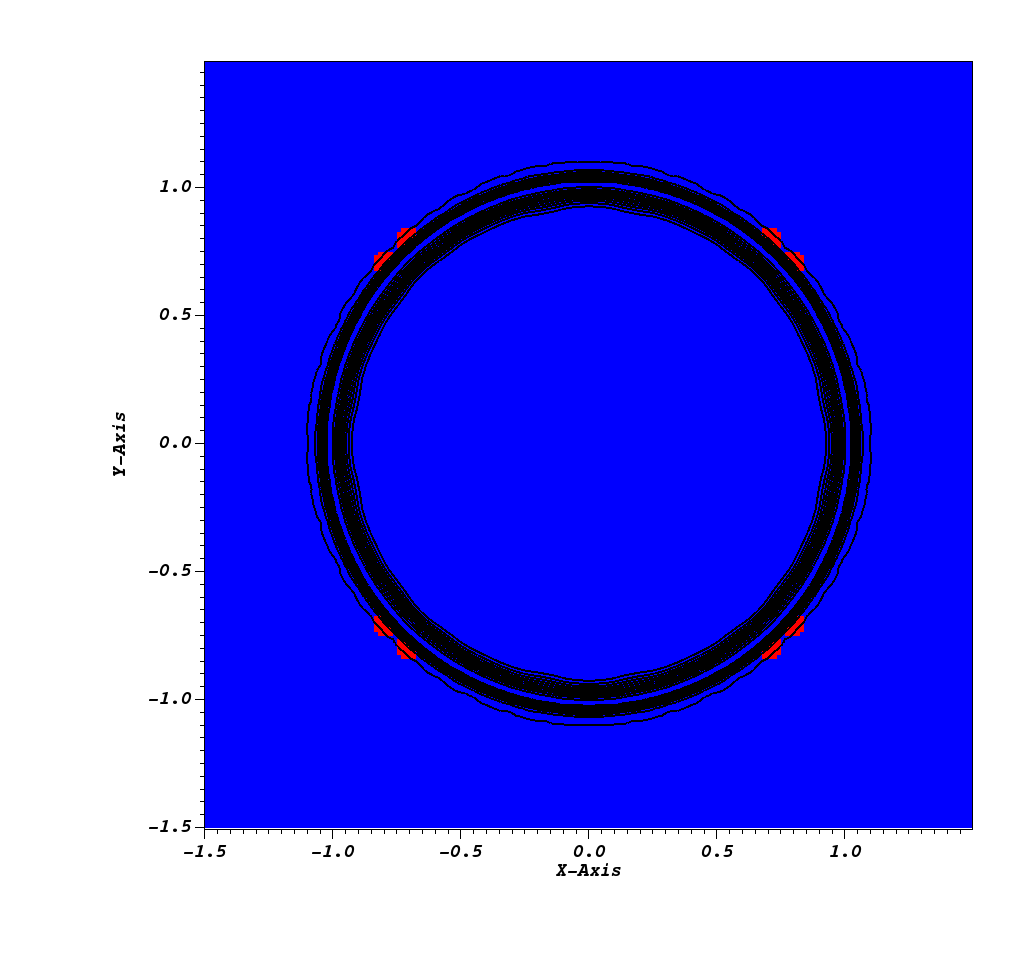}
\label{iso_lines}}
\end{center}
\caption{\label{strongshock}Result of case \ref{strong} on a $200\times200$ grid, $CFL=1$, space order: 4, time order: 4, MOOD, final time.}
\end{figure}
\section{Conclusion}
\label{kinetic:sec_conclusion}
The purpose of this work was primarily to extend a class of kinetic numerical methods that can run at least at CFL
one to the two dimensional case. These methods can handle in a simple manner hyperbolic problems, and in particular compressible fluid mechanics one.
Our methodology can be arbitrarily high order and can use CFL number larger or
equal to unity on regular Cartesian meshes. We have chosen the minimum number of waves, there are probably better solutions, and this will be the topic of further studies. These methods are not designed only for fluid mechanics, and other type of systems will be explored in the future. One interesting feature of this methods, working for CFL=1, is that the algebra for the streaming part of the algorithm can be made very efficient. This is an interesting feature.

\section*{Acknowledgments}
F.N.M has been funded by the SNF project  	200020\_204917 entitled "Structure preserving and fast methods for hyperbolic systems of conservation laws".
\bibliographystyle{unsrt}
\bibliography{main}
\end{document}